\newtheorem{dummy}{anything}[section]
\newtheorem{theorem}[dummy]{Theorem}
\newtheorem*{thma}{Theorem A}
\newtheorem*{corb}{Corollary B}
\newtheorem{lemma}[dummy]{Lemma}
\newtheorem{corollary}[dummy]{Corollary}
\theoremstyle{definition}
 \newtheorem{example}[dummy]{Example}
\renewcommand{\labelenumi}{(\roman{enumi})}
\newcommand
{\eqncount}{\setcounter{equation}{\value{dummy}}%
\addtocounter{dummy}{1}}
\newcommand{\cG}{\mathcal G}
\newcommand{\cH}{\mathcal H}
\newcommand{\cM}{\mathcal M}
\newcommand{\bZ}{\mathbf Z}
\newcommand{\bbZ}{\mathbf Z}
\newcommand{\bbN}{\mathbb N}
\newcommand{\fP}{\mathfrak P}
\newcommand{\fE}{\mathfrak E}
\newcommand{\cy}[1]{\bZ/{#1}}
\newcommand{\vv}{\, | \,}
\newcommand{\mmatrix}[4]{\left (\vcenter
{\xymatrix@C-2pc@R-2pc{#1&#2\\#3&#4} }
\right )}
\DeclareMathOperator{\Ind}{Ind}
\DeclareMathOperator{\ind}{Ind}
\DeclareMathOperator{\Res}{Res}
\DeclareMathOperator{\res}{Res}
\DeclareMathAlphabet\EuR{U}{eur}{m}{n}
\SetMathAlphabet\EuR{bold}{U}{eur}{b}{n}
 \DeclareMathOperator{\aut}{Aut}
   \DeclareMathOperator{\Nil}{Nil}
   \DeclareMathOperator{\NK}{N\hskip-.5pt K}
    \DeclareMathOperator{\NIL}{NIL}
     \DeclareMathOperator{\FP}{\mathcal{P}}
 \newcommand{\id}{\operatorname{id}}
\newcommand{\la}{\langle}
\newcommand{\ra}{\rangle}
\DeclareMathOperator{\pr}{pr}
\newcommand{\hsp}[1]{{\hphantom{#1}}}
\newcommand{\clunk}[2]{{\hbox to #1 truecm{\hfill$#2$\hfill}}}
\begin{document}

\def\evenhead{{\protect\centerline{\textsl{\large{Ian Hambleton and Wolfgang L\"uck}}}\hfill}}

\def\oddhead{{\protect\centerline{\textsl{\large{Bass Nil Groups}}}\hfill}}

\pagestyle{myheadings} \markboth{\evenhead}{\oddhead}

\thispagestyle{empty} \noindent{{\small\rm Pure and Applied
Mathematics Quarterly\\ Volume 8, Number 1\\ (\textit{Special Issue:
In honor of \\ F. Thomas Farrell and Lowell E. Jones, Part 1 of 2})\\
199---219, 2012} \vspace*{1.5cm} \normalsize

\begin{center}
\Large{\bf Induction and Computation of Bass Nil Groups for Finite
Groups}
\end{center}

\begin{center}
{\large Ian Hambleton and Wolfgang L\"uck}
\end{center}
\footnotetext{Received July 5, 2007.}

\footnotetext{Research partially supported by NSERC Discovery Grant
A4000 and by the Max Planck Forschungspreis of the second author.
The first author would like to thank the SFB 478, Universit\"at
M\"unster, for its hospitality and support.}

\renewcommand{\thefootnote}{\fnsymbol{footnote}}

\bigskip

\begin{center}
\begin{minipage}{5in}
\noindent{\bf Abstract:} Let $G$ be a finite group. We show that the
Bass $\Nil$-groups $\NK_n(RG)$, $n \in \bZ$, are generated from the
$p$-subgroups of $G$ by induction, certain twistings maps depending
on elements in the centralizers of the $p$-subgroups, and the
Verschiebung homomorphisms.  As a consequence, the groups
$\NK_n(RG)$ are generated by induction from elementary subgroups.
For $\NK_0(\bZ G)$ we get an improved estimate of the torsion
exponent.
\\
\noindent{\bf Keywords:} algebraic K-theory, Dress induction, Bass Nil groups
\end{minipage}
\end{center}

\section{Introduction}\label{one}
In this note we study the  Bass $\Nil$-groups (see \cite[Chap.~XII]{bass1})
$$\NK_n(R G) = \ker(K_n(R G[t]) \to K_n(R G)),$$
where $R$ is an associative ring with unit, $G$ is a finite group, $n \in \bZ$, and the augmentation map sends $t\mapsto 0$. Alternately, the isomorphism
$$\NK_n(RG) \cong \widetilde K_{n-1}(\NIL(RG))$$
identifies the Bass $\Nil$-groups with the  $K$-theory of the
category $\NIL(RG)$ of nilpotent endomorphisms $(Q, f)$ on finitely-generated projective $RG$-modules \cite[Chap.~XII]{bass1}, \cite[Theorem 2]{grayson2}. Farrell \cite{farrell1} proved that $\NK_1(RG)$ is not finitely-generated as an abelian group whenever it is non-zero, and the corresponding result holds for $\NK_n(RG)$,  $n\in \bZ$ (see \cite[4.1]{weibel1}), so some organizing principle is needed to better understand the structure of the $\Nil$-groups. Our approach is via induction theory.
The functors $\NK_n$ are Mackey functors on the subgroups of $G$, and we ask to what extent they can be computed from the $\Nil$-groups of proper subgroups of $G$.

The Bass-Heller-Swan formula \cite[Chap.~XII, \S 7]{bass1}, \cite[p.~236]{grayson1} relates the Bass $\Nil$-groups with the $K$-theory of the infinite group $G \times \bZ$.
There are two (split) surjective maps
$$N_{\pm} \colon K_n(R[G \times \bZ]) \to \NK_n(R G)$$
which form part of the Bass-Heller-Swan direct sum decomposition
$$ K_n(R[G \times \bZ])=K_n(R G)\oplus K_{n-1}(R G)\oplus \NK_n(R G)\oplus \NK_n(R G).$$
Notice that both $K_n(R[(-)\times \bZ])$ and $\NK_n(R[-])$ are  Mackey functors on the subgroups of $G$ (see Farrell-Hsiang \cite[\S 2]{farrell-hsiang2} for this observation about infinite groups).
We observe that the maps $N_{\pm}$ are actually natural transformations of Mackey functors (see Section \ref{seven}). It follows from Dress induction \cite{dress2} that the functors $\NK_n(RG)$ and the maps $N_{\pm}$ are computable from the hyperelementary family (see Section \ref{four}, and Harmon \cite[Cor.~4]{harmon1} for the case $n=1$).
We will show how the results of Farrell \cite{farrell1} and techniques of Farrell-Hsiang \cite{farrell-hsiang3} lead to a better generation statement  for the Bass $\Nil$-groups.

\smallskip
We need some notation to state the main result.
For each prime $p$, we denote by $\fP_p(G)$ the set of finite $p$-subgroups of $G$, and by $\fE_p(G)$ the set of $p$-elementary subgroups of $G$. Recall that a $p$-elementary group has the form $E=C \times P$, where $P$ is a finite $p$-group, and $C$ is a finite cyclic group of order prime to $p$. For each element $g\in C$, we let
$$I(g) = \{ k \in \bbN \vv \text{\ a\ prime\ } q \text{\ divides\ } k \Rightarrow q \text{\ divides\ } |g|\}$$
where $|g|$ denotes the order of $g$. For each $P \in \fP_p(G)$, let
$$C_G^\perp(P) = \{ g\in G \vv gx=xg,  \forall x \in P,\text{\ and\ } p\nmid |g|\}$$
and for each $g\in C_G^\perp(P)$ we define a functor
$$\phi(P,g)\colon \NIL(RP) \to \NIL(RG)$$
by sending a nilpotent $RP$-endomorphism $f\colon Q\to Q$ of a finitely-generated projective $RP$-module $Q$ to the nilpotent $RG$-endomorphism
$$RG\otimes_{RP} Q \to RG\otimes_{RP} Q, \qquad x\otimes q \mapsto xg\otimes f(q)\ .$$
Note that this $RG$-endomorphism is well-defined since $g \in C_G^\perp(P)$.
The functor $\phi(P,g)$ induces a homomorphism $$\phi(P,g)\colon \NK_n(RP) \to \NK_n(RG)$$ for each $n\in \bZ$.
For each $p$-subgroup $P$ in $G$, define a homomorphism
$$\Phi_P\colon \NK_n(RP) \to \NK_n(RG)$$
by the formula
$$\Phi_P = \sum_{{g\in C_G^\perp(P), \ k \in I(g)}} {\hskip -6pt}V_k \circ \phi(P,g),$$
where
$$V_k \colon \NK_n(RG) \to \NK_n(RG)$$
denotes the Verschiebung homomorphism, $k \geq 1$, recalled in more detail in Section \ref{two}.

\begin{thma}
Let $R$ be an associative ring with unit, and $G$ be a finite group. For each prime $p$, the map
$$\Phi = (\Phi_P)\colon  \bigoplus_{P \in \fP_p(G)} \NK_n(RP)_{(p)} \to \NK_n(RG)_{(p)}$$
is surjective for all $n\in \bZ$, after localizing at $p$.
\end{thma}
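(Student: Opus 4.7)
The plan is to combine Dress induction for the Mackey functor $\NK_n(R[-])_{(p)}$ with a Farrell--Hsiang style argument on $E \times \bZ$ for $E$ a $p$-elementary subgroup of $G$. First, I would invoke the Dress induction setup of Section~\ref{four} to express $\NK_n(RG)_{(p)}$ as a sum of images of ordinary induction from $p$-hyperelementary subgroups $H = C \rtimes P$ of $G$, and then reduce further to the $p$-elementary case $E = C \times P$ by a standard argument using that the action of $P$ on $C$ in $H$ has kernel of $p$-power index. Because $V_1 \circ \phi(P, 1) = \Ind_P^G$ appears as the $(g,k)=(1,1)$ summand of $\Phi_P$, and because the conditions $g \in C_G^\perp(P)$, $k \in I(g)$ are preserved under subgroup inclusion, transitivity of induction reduces the problem to the special case $G = E = C \times P$ with $P$ the unique Sylow $p$-subgroup.

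The main step is then to show that, for $E = C \times P$, every class in $\NK_n(RE)_{(p)}$ is expressible as a sum of classes of the form $V_k \circ \phi(P,g)(\xi)$ with $\xi \in \NK_n(RP)_{(p)}$, $g \in C$, and $k \in I(g)$. Via the Bass--Heller--Swan splitting, $\NK_n(RE)$ embeds as a direct summand of $K_{n+1}(R[E \times \bZ])$, so one can work inside $K_{n+1}$ of this larger group ring. For each pair $(g,k)$ with $g \in C$ and $k \geq 1$, the element $(g, k) \in E \times \bZ$ together with $P$ generates a subgroup $H_{g,k} \leq E \times \bZ$ isomorphic to $P \times \bZ$; the induction $\Ind_{H_{g,k}}^{E \times \bZ}$ applied to a Nil-class $\xi \in \NK_n(RP)$ sends $t \in R[P][t,t^{-1}]$ to $g t^k \in R[E][t,t^{-1}]$, and after BHS projection back to $\NK_n(RE)$ this realizes precisely $V_k \circ \phi(P, g)(\xi)$ (up to conventional normalizations of $V_k$). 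The Farrell--Hsiang induction technique from \cite{farrell-hsiang3}, together with Farrell's analysis of Nil groups in \cite{farrell1}, would then conclude that the family $\{H_{g,k} : g \in C,\ k \in I(g)\}$ suffices to generate all of $\NK_n(RE)_{(p)}$ via restriction--induction.

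The principal obstacle is this last $p$-local surjectivity claim: namely, that the chosen family of virtually cyclic subgroups really detects every Nil class after inverting primes different from $p$. The specific restriction $k \in I(g)$ is exactly what makes the Frobenius--Verschiebung bookkeeping close up, since primes dividing such a $k$ all divide $|g|$ and are therefore prime to $p$. This is also where the $p$-localization is essential, because the denominators arising in the Farrell--Hsiang induction-restriction formulas are products of orders $|g|$ and Verschiebung indices $k$, all of which must be invertible in order for the combination $\sum_{g,k} V_k \circ \phi(P,g)$ to recover the original class.
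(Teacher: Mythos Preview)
Your plan has a genuine gap at the very first reduction step. You propose to use Dress induction on $G$ to pass to $p$-hyperelementary subgroups (fine), and then to ``reduce further to the $p$-elementary case $E = C \times P$ by a standard argument using that the action of $P$ on $C$ in $H$ has kernel of $p$-power index.'' This reduction is not standard and, as stated, does not work: the $p$-elementary subgroup $C \times \ker(\alpha)$ of a $p$-hyperelementary $H = C \rtimes_\alpha P$ has $p$-\emph{power} index in $H$, so the induction--restriction composite is multiplication by a power of $p$, which is \emph{not} invertible after localizing at $p$. In fact, $p$-elementary generation of $\NK_n(RG)_{(p)}$ is exactly Corollary~B of the paper, and it is deduced \emph{from} Theorem~A, not used as an input to it. Your argument is circular at this point.

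The paper sidesteps this by never reducing to elementary subgroups of $G$. Instead, for a fixed $x \in \NK_n(RG)_{(p)}$ one first finds $M = M(x)$ with $F_m(x) = 0$ for all $m \ge M$ (Lemma~\ref{lem:Frobenius_finally_vanishes}), then chooses $N = N(M,|G|,p)$ as in Section~\ref{three}, and applies Dress induction (Theorem~\ref{thm: computable}) to the Mackey functor $H \mapsto K_n(R\Gamma_H)$ on subgroups of the \emph{finite} group $G \times \cy{N}$, via the projection $\pr\colon G \times \bZ \to G \times \cy{N}$. The crucial Lemma~\ref{lem:deep_and_p-torsion} then shows that every $p$-hyperelementary $H \subset G \times \cy{N}$ is either ``deep'' (its preimage $\Gamma_H$ lies in $G \times m\bZ$ for some $m \ge M$, so its contribution to $x = r_n(i_n(x))$ vanishes by the choice of $M$) or already $p$-elementary, with $\Gamma_H \cong P \times \bZ$ sitting inside $G \times \bZ$ exactly as one of your subgroups $H_{g,k}$. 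The identification of the resulting induction map with $V_k \circ j_* \circ \phi(P,g)$ is then Lemma~\ref{lem:three_commutative_diagrams}. So the passage from hyperelementary to elementary happens \emph{inside} $G \times \cy{N}$ and is forced by the arithmetic of $N$; it is not a preliminary reduction inside $G$.

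Two smaller points. The Bass--Heller--Swan summand $\NK_n(A)$ lies in $K_n(A\bZ)$, not $K_{n+1}(A\bZ)$. And your final paragraph misidentifies where the $p$-localization enters: it is needed for $p$-hyperelementary computability in Dress induction, not to invert Verschiebung indices $k$ or orders $|g|$. The restriction $k \in I(g)$ is obtained at the very end by a separate manipulation using the identity $V_{k_0} \circ \phi(P,g^{k_0}) = \phi(P,g) \circ V_{k_0}$ from Stienstra.
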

For every $g\in  C_G^\perp(P)$, the homomorphism $\phi(P,g)$ factorizes as
$$\NK_n(RP) \xrightarrow{\phi(P,g)} \NK_n(R[C \times P]) \xrightarrow{i_*} \NK_n(RG)$$
where $C = \la g \ra$ and $i\colon C\times P \to G$ is the inclusion map.  Since the Verschiebung homomorphisms are natural with respect to the maps induced by group homomorphisms, we obtain:
\begin{corb}
The sum of the induction maps
$$\bigoplus_{E\in \fE_p(G)} \NK_n(RE)_{(p)} \to \NK_n(RG)_{(p)}$$
from $p$-elementary subgroups is surjective, for all $n\in \bZ$.
\end{corb}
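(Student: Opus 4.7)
\smallskip
\noindent\textbf{Proof plan for Corollary B.} My plan is to flesh out the sketch given in the paragraph preceding the corollary, namely to deduce Corollary B from Theorem A by factoring every summand $\Phi_P$ through induction from a $p$-elementary subgroup. Fix $P \in \fP_p(G)$ and $g \in C_G^\perp(P)$, and set $C = \la g \ra$ and $E = C \times P$. By definition of $C_G^\perp(P)$, the cyclic group $C$ has order prime to $p$ and centralizes $P$, so $E$ is a $p$-elementary subgroup of $G$, and the inclusion $P \hookrightarrow G$ factors as $P \hookrightarrow E \xrightarrow{\, i \,} G$.

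The essential step is to construct an intermediate twisting functor
$$\phi'(P, g)\colon \NIL(RP) \to \NIL(RE), \qquad (Q, f) \mapsto \bigl(RE \otimes_{RP} Q,\ y\otimes q \mapsto yg \otimes f(q)\bigr),$$
which is well-defined because $g \in E$ commutes with $P$. Via the canonical isomorphism $RG \otimes_{RP} Q \cong RG \otimes_{RE} (RE \otimes_{RP} Q)$, one identifies $\phi(P, g)$ with $i_* \circ \phi'(P, g)$ on $\NK_n$. Since the Verschiebung homomorphisms are natural with respect to induction maps (cf.\ Section \ref{two}), one has $V_k \circ i_* = i_* \circ V_k$, whence
$$V_k \circ \phi(P, g) = i_* \circ V_k \circ \phi'(P, g).$$

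Summing over $g \in C_G^\perp(P)$ and $k \in I(g)$, and then over $P \in \fP_p(G)$, this expresses the map $\Phi$ of Theorem A as a composition of some homomorphism $\bigoplus_{P} \NK_n(RP)_{(p)} \to \bigoplus_{E \in \fE_p(G)} \NK_n(RE)_{(p)}$ followed by the sum of induction maps appearing in Corollary B. As $\Phi$ is surjective by Theorem A, the sum of induction maps must be surjective as well. The only point requiring nontrivial verification is the naturality of $V_k$ under induced maps; this will be routine from the description of the Verschiebung recalled in Section \ref{two}, essentially because $V_k$ is obtained by pulling back along the polynomial endomorphism $t \mapsto t^k$, a construction that commutes with tensoring up along $RE \hookrightarrow RG$.
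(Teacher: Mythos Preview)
Your proposal is correct and follows exactly the approach indicated in the paper: factor $\phi(P,g)$ through $\NK_n(R[C\times P])$ with $C=\langle g\rangle$, then use naturality of the Verschiebung under induction to push $V_k$ inside, so that $\Phi$ factors through the sum of induction maps from $p$-elementary subgroups. The only refinement is that the paper records the needed naturality of $V_k$ with respect to induction maps as the Corollary in Section~\ref{seven} (derived from Lemma~\ref{lem:Bass_Heller_Swan_and_Mackey} together with Lemma~\ref{lem:ind/res_and_V/F}), rather than from Section~\ref{two} alone.
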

Note that Theorem A does not show that $\NK_n(RG)$ is generated by induction from $p$-groups, because the maps $\phi(P,g)$ for $g\neq 1$ are not the usual maps induced by inclusion $P \subset G$.

\smallskip
The Bass $\Nil$-groups are non-finitely generated torsion groups (if non-zero) so they remain difficult to calculate explicitly, but we have some new qualitative results. For example, Theorem A shows that the order of every element of $\NK_n(R G)$ is some power of $m=|G|$, whenever $\NK_n(R) =0$ (since its $q$-localization is zero for all $q\nmid m$).
For $R = \bZ$ and some related rings, this is a result of Weibel~\cite[(6.5), p.~490]{weibel1}.
In particular, we know that every element of $\NK _n(\bZ P)$ has $p$-primary order, for every  finite $p$-group $P$.
 If $R$ is a regular (noetherian) ring (e.g.~$R=\bZ$), then $\NK_n(R) = 0$ for all $n\in \bZ$.

Note that an exponent that holds uniformly for all elements in $\NK_n(RP)$, over all $p$-groups of $G$,  will be an exponent for $\NK_n(RG)$. As a special case, we have
 $\NK _n(\bZ[\bZ/p])=0$ for $n \le 1$, for  $p$ a prime (see
Bass-Murthy \cite{bass-murthy1}), so Theorem A
 implies:

\begin{corollary} \label{cor:vanishing_of_NK_n(ZG)_for_n_le_1}
  Let $G$ be a finite group and let $p$ be a prime. Suppose that $p^2$ does not divide the
  order of $G$.  Then
  $$\NK _n(\bZ G)_{(p)} = 0$$
for $n \le 1$.
\end{corollary}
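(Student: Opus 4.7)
The plan is to apply Theorem A at the prime $p$ and reduce the vanishing of $\NK_n(\bZ G)_{(p)}$ to the vanishing of $\NK_n(\bZ P)$ for the $p$-subgroups $P$ of $G$ separately. By Theorem A, the map
$$\Phi = (\Phi_P)\colon \bigoplus_{P \in \fP_p(G)} \NK_n(\bZ P)_{(p)} \to \NK_n(\bZ G)_{(p)}$$
is surjective for every $n\in \bZ$. Therefore it suffices to show that each summand on the left vanishes for $n \le 1$.

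The hypothesis $p^2 \nmid |G|$ is exactly what makes the family $\fP_p(G)$ trivial to control: by Lagrange's theorem every $p$-subgroup $P \le G$ has order dividing $p$, so $P$ is either the trivial group or cyclic of order $p$. If $P = \{1\}$, then $\bZ P = \bZ$ is a regular noetherian ring, hence $\NK_n(\bZ) = 0$ for all $n\in \bZ$ by the classical Bass--Heller--Swan theorem (this fact is already recalled in the paragraph preceding the corollary statement). If $P \cong \bZ/p$, then the result of Bass--Murthy \cite{bass-murthy1} cited in the preceding paragraph gives $\NK_n(\bZ[\bZ/p]) = 0$ for $n \le 1$, and this vanishing persists after localization at $p$.

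Combining these two cases shows that for $n \le 1$ every summand $\NK_n(\bZ P)_{(p)}$ with $P \in \fP_p(G)$ is zero, so the domain of $\Phi$ vanishes and surjectivity of $\Phi$ forces $\NK_n(\bZ G)_{(p)} = 0$. There is essentially no obstacle here once Theorem A is available: the entire argument is a structural combination of Theorem A with Bass--Murthy, and the hypothesis $p^2 \nmid |G|$ is used only to ensure that the $p$-subgroups entering Theorem A are covered by the Bass--Murthy calculation.
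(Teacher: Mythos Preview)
Your proof is correct and matches the paper's approach exactly: the paper simply notes that $\NK_n(\bZ[\bZ/p])=0$ for $n\le 1$ by Bass--Murthy and then states that Theorem~A implies the corollary, which is precisely the argument you have written out in detail.
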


As an application, we get a new proof of the fact that $\NK _n(\bZ G) = 0$, for $n \le 1$, if the order of $G$ is square-free (see Harmon \cite{harmon1}).

We also get from Theorem A an improved estimate on the exponent of $\NK _0(\bZ G)$, using a result of Connolly-da Silva \cite{connolly-dasilva1}. If $n$ is a positive integer, and $n_q=q^k$ is its $q$-primary part, then let  $c_q(n) = q^l$, where $l \geq \log_q(kn)$.
According to \cite{connolly-dasilva1}, the exponent of $\NK_0(\bZ G)$ divides
$$c(n) = \prod_{q\mid n} c_q(n), \quad \text{where\ } n = |G|,$$
but according to Theorem A, the exponent of $\NK_0(\bZ G)$ divides
$$d(n) = \prod_{q\mid n} c(n_q)\ .$$
For example, $c(60) = 1296000$, but $d(60) = 120$.

\medskip
\noindent
{\textbf{Acknowledgement}}: This paper is dedicated to Tom Farrell and Lowell Jones, whose work in geometric topology has  been a constant inspiration. We are also indebted to Frank Quinn, who suggested that the Farrell-Hsiang induction techniques should be re-examined (see \cite{quinn1}).
\section{Bass $\Nil$-groups}\label{two}
Standard constructions in algebraic $K$-theory for exact categories or Waldhausen
categories yield only $K$-groups in degrees $n \ge 0$ (see Quillen \cite{quillen1},
Waldhausen~\cite{waldhausen1}).  One approach on the categorial level to negative
$K$-groups has been developed by Pedersen-Weibel (see \cite{pedersen-weibel2}, \cite[\S 2.1]{hp2004}).  Another
ring theoretic approach is given as follows (see
Bartels-L\"uck~\cite[Section~9]{bartels-lueck1}, Wagoner~\cite{wagoner1}).  The
{\em cone ring} $\Lambda \bbZ$ of $\bbZ$ is the ring of column and row finite $\bbN \times
\bbN$-matrices over $\bbZ$, i.e., matrices such that every column and every row contains
only finitely many non-zero entries. The {\em suspension ring} $\Sigma \bbZ$ is the
quotient of $\Lambda \bbZ$ by the ideal of finite matrices. For an associative (but not necessarily commutative) ring $A$ with
unit,  we define $\Lambda A = \Lambda \bbZ \otimes_{\bbZ} A$ and $\Sigma A = \Sigma
\bbZ \otimes_{\bbZ} A$.  Obviously $\Lambda$ and $\Sigma$ are functors from the category
of rings to itself.  There are identifications, natural in $A$,
\eqncount
\begin{eqnarray}
K_{n-1}(A) & = & K_n(\Sigma A) \label{K_n-1(A)_is_K_n(SigmaA)}
\\
\eqncount
\NK_{n-1}(A)& = & \NK_n(\Sigma A) \label{NK_n-1(A)_is_NK_n(SigmaA)}
\end{eqnarray}
 for all $n \in \bbZ$. In our applications, usually $A = RG$ where $R$ is a ring with unit and $G$ is a finite group.

Using these identifications it is clear how to extend the definitions of certain maps between
$K$-groups given by exact functors to negative degrees.  Moreover, we will
 explain constructions and proofs of the commutativity of certain diagrams only for
$n \ge 1$, and will not explicitly mention that these carry over to all $n \in \bbZ$,
because of the identifications~\eqref{K_n-1(A)_is_K_n(SigmaA)} and
\eqref{NK_n-1(A)_is_NK_n(SigmaA)} and the obvious identification $\Sigma(RG) = (\Sigma
R)G$, or because of Pedersen-Weibel~\cite{pedersen-weibel2}.

We have a direct sum decomposition
\eqncount
\begin{eqnarray}K_n(A[t])  & = & K_n(A) \oplus \NK_n(A)
\label{K_n(A[t])_is_NK_n(A)_oplus_K_N(A)}
\end{eqnarray}
which is natural in $A$, using the inclusion $A \to A[t]$ and the ring map  $A[t] \to A$ defined by $t\mapsto 0$.

Let $\FP(A)$ be the exact category of finitely generated projective $A$-modules, and let
$\NIL(A)$ be the exact category of nilpotent endomorphism of finitely generated projective
$A$-modules.  The functor $\FP(A) \to \NIL(A)$ sending $Q\mapsto (Q,0)$  and the
functor $\NIL(A) \to \FP(A)$ sending $(Q,f)\mapsto Q$ are exact functors. They
yield a split injection on the $K$-groups $K_n(A):= K_n(\FP(A)) \to K_n(\NIL(A))$ for $n
\in \bbZ$.  Denote by $\widetilde{K}_n(\NIL(A))$ the cokernel for $n \in \bbZ$.  There is
an identification (see Grayson~\cite[Theorem~2]{grayson2})
\eqncount
\begin{eqnarray}
 \widetilde{K}_{n-1}(\NIL(A)) & = & \NK_{n}(A),
\label{K_n(Nil(A))_is_NK_nplus1(A)}
\end{eqnarray}
 for $n \in \bbZ$,
essentially given by the passage from a nilpotent $A$-endomorphism $(Q,f)$  to the $A\bbZ$-automorphism
$$A\bbZ \otimes_A Q \to A\bbZ \otimes_A Q, \quad u \otimes q \mapsto u \otimes q - ut
\otimes f(q),$$
for $t \in \bbZ$ a fixed generator.

The Bass $\Nil$-groups appear in the Bass-Heller-Swan decomposition for $n \in \bbZ$ (see  \cite[Chapter~XII]{bass1}, \cite{bass-heller-swan1}, \cite[p.~236]{grayson1}, \cite[p.~38]{quillen1}, and \cite[Theorem~10.1]{swan3}
for the original sources, or the expositions in \cite[Theorems~3.3.3 and 5.3.30]{rosenberg1}, \cite[Theorem~9.8]{srinivas1}
).
\eqncount
\begin{eqnarray}
B \colon K_n(A) \oplus K_{n-1}(A) \oplus \NK_n(A) \oplus \NK_n(A)
\xrightarrow{\cong} K_n(A\bbZ).
\label{Bass-Heller-Swan_decomposition}
\end{eqnarray}
The isomorphism $B$ is natural  in $A$ and comes from the localization sequence
\eqncount
\begin{multline}
  0 \to K_n(A) \xrightarrow{K_n(i) \oplus -K_n(i)} K_n(A[t]) \oplus K_n(A[t])
  \xrightarrow{K_n(j_+) + K_n(j_-)} K_n(A\bbZ)\\
  \xrightarrow{\partial_n} K_{n-1}(A) \to 0
\label{localization_sequence}
\end{multline}
where $i \colon A \to A[t]$ is the obvious inclusion and the inclusion $j_{\pm} \colon
A[t] \to A\bbZ$ sends $t$ to $t^{\pm 1}$ if we write $A\bbZ = A[t,t^{-1}]$, the splitting
of $\partial_n$
\eqncount
\begin{eqnarray}
s_n \colon K_{n-1}(A) \to K_n(A\bbZ)
\label{splitting_s_n}
\end{eqnarray}
which is given by the natural pairing
\begin{eqnarray*}
K_{n-1}(A) \otimes K_1(\bbZ[t,t^{-1}]) \to K_n(A \otimes_{\bbZ}\bbZ[t,t^{-1}]) = K_n(A\bbZ)
\end{eqnarray*}
evaluated at the class of unit $t \in \bbZ[t,t^{-1}]$ in $K_1(\bbZ[t,t^{-1}])$, and the
canonical splitting~\eqref{K_n(A[t])_is_NK_n(A)_oplus_K_N(A)}.
Let $B$ be the direct sum of  $K_n(j_+\circ i)$,  $s_n$, and the restrictions of the maps $ K_n(j_+)$ and $K_n(j_{-})$ to $\NK_n(A)$.

In particular we get two homomorphisms, both natural in $A$, from the Bass-Heller-Swan
decomposition~\eqref{Bass-Heller-Swan_decomposition}\begin{eqnarray*}
i_n \colon \NK_n(A) & \to & K_n(A\bbZ)\\
r_n \colon K_n(A\bbZ) & \to & \NK_n(A),
\end{eqnarray*}
 by focusing on the first copy of
$\NK_n(A)$,
such that $r_n \circ i_n$ is the identity on $\NK_n(A)$.

Let $\sigma_k \colon \bbZ \to
\bbZ$ be the injection given by $t\mapsto t^k$. We may consider the ring $A[t]$ as an $A[t]-A[t]$ bimodule  with standard left action, and right action $a(t)\cdot b(t) = a(t)b(t^k)$ induced by $\sigma_k$.
This map induces an induction functor
$$\ind_k\colon \FP(A[t]) \to \FP(A[t])$$ defined by
$P \mapsto A[t]\otimes_{\sigma_k} P$. There is also a restriction functor $$\res_k\colon \FP(A[t]) \to \FP(A[t])$$ defined by
equipping $P$ with the new $A[t]$-module structure
$a(t)\cdot p = a(t^k)p$, for all $a(t) \in A[t]$ and all $p \in P$.
 The
 induction and
restriction functors yield two homomorphisms
\begin{eqnarray*}
 \ind_{k} \colon K_n(A\bbZ) & \to & K_n(A\bbZ)
\\
\res_{k } \colon K_n(A\bbZ) & \to & K_n(A\bbZ)\ .
\end{eqnarray*}
See \cite{stienstra1} or \cite[p.~27]{quillen1} for more details.

There are also \emph{Verschiebung} and  \emph{Frobenius} homomorphisms
\eqncount
\begin{eqnarray}
V_k, F_k \colon \NK_n(A) & \to & \NK_n(A)
\label{F_k_and_V_k}
\end{eqnarray}
 induced on the $\Nil$-groups (and related to $\ind_k$ and $\res_k$ respectively).
The Frobenius  homomorphism is induced by the functor
$\NIL(A) \to \NIL(A)$ sending
$$(f \colon Q \to Q) \mapsto (f^k \colon Q \to Q),$$
 while the Verschiebung homomorphism is induced by the functor
$$\bigoplus_{i=1}^k Q \to \bigoplus_{i=1}^k Q,\quad (q_1, q_2, \ldots q_k) \mapsto
(f(q_k),q_1, q_2,\ldots, q_{k-1})\ .$$

The next result is proven by
Stienstra~\cite[Theorem~4.7]{stienstra1}). (Notice that
Stienstra considers only commutative rings $A$, but his argument
goes through in our case since the set of polynomials $T$ we
consider is $\{t^n \vv n \in \bbZ, n \ge 0\}$ and each
polynomial in $T$ is central in $A[t]$ with respect to the
multiplicative structure.)
\begin{lemma}\label{lem:ind/res_and_V/F}
  The following diagrams commute for all $n \in \bbZ$ and $k\in \bbZ, k \ge 1$
  $$\xymatrix{\NK_n(A)\ar[r]^{i_n}\ar[d]^{F_k}&{K_n(A\bbZ)}\ar[d]^{\res_{k}}\\
      {\NK_n(A)}\ar[r]^{i_n}&{K_n(A\bbZ)}}$$
       and
 $$\xymatrix{\NK_n(A)\ar[r]^{i_n}\ar[d]^{V_k}&{K_n(A\bbZ)}\ar[d]^{\ind_k}\\
    {\NK_n(A)}\ar[r]^{i_n}&{K_n(A\bbZ)}}$$
\end{lemma}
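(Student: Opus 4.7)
The plan is to verify both squares directly on representatives, using the explicit description of $i_n$ coming from Grayson's identification~\eqref{K_n(Nil(A))_is_NK_nplus1(A)}: a nilpotent pair $(Q, f)$ maps to the class of the $A\bbZ$-automorphism $1 - tf$ on $A\bbZ \otimes_A Q$. First I would reduce to $n = 1$ by replacing $A$ with a suspension ring $\Sigma^j A$ and invoking~\eqref{K_n-1(A)_is_K_n(SigmaA)} and~\eqref{NK_n-1(A)_is_NK_n(SigmaA)}; all of the functors $V_k, F_k, \ind_k, \res_k$ and the map $i_n$ commute with suspension up to these natural identifications, so a proof in degree one suffices.

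For the Verschiebung square, $i_1 V_k[Q,f]$ will be represented by the $k \times k$ block matrix $1 - tV$ over $A\bbZ$ acting on $(A\bbZ \otimes_A Q)^k$: $1$'s on the diagonal, $-t$'s on the subdiagonal, and $-tf$ in the upper-right corner. On the other side, the natural isomorphism of left $A\bbZ$-modules $A\bbZ \otimes_{\sigma_k} A\bbZ \cong A\bbZ$ given by $a \otimes b \mapsto a\,b(t^k)$ identifies $\ind_k(A\bbZ \otimes_A Q,\, 1-tf)$ with $(A\bbZ \otimes_A Q,\, 1 - t^k f)$. Their classes agree after the row reduction that successively adds $t$ times row $r$ to row $r+1$ for $r = 1, \ldots, k-1$, which reduces $1 - tV$ to the upper triangular matrix with diagonal $(1, \ldots, 1, 1 - t^k f)$.

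For the Frobenius square, restriction along $\sigma_k$ decomposes $A\bbZ \otimes_A Q$ by the residue modulo $k$ of the exponent of the old variable, yielding a module isomorphic to $(A\bbZ \otimes_A Q)^k$ on which $1 - tf$ becomes a $k \times k$ block matrix with $1$'s on the diagonal, $-f$'s on the subdiagonal, and $-tf$ (using the new $t$) in the upper-right corner. The analogous row reduction, now adding $f$ times row $r$ to row $r+1$, reduces this to the upper triangular matrix with diagonal $(1, \ldots, 1, 1 - t f^k)$, matching the representative of $i_1 F_k[Q,f]$.

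The hard part will be promoting these $K_1$-level matrix identities to higher $K_n$: the elementary matrix factorizations provided by the row reductions must be lifted to natural transformations (or short exact sequences) between the relevant exact functors on $\FP(A\bbZ)$, which is essentially the strategy of Stienstra~\cite[Theorem~4.7]{stienstra1}. The only adaptation required in the non-commutative setting is the one already flagged in the remark immediately preceding the lemma: every polynomial $t^n$ with $n \geq 0$ is central in $A[t]$, so Stienstra's localization and pairing constructions transfer verbatim.
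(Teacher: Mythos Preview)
The paper does not give its own proof of this lemma: it simply cites Stienstra~\cite[Theorem~4.7]{stienstra1} and records the observation (which you also make) that Stienstra's hypothesis that $A$ be commutative is unnecessary here because every $t^n$ is central in $A[t]$. Your proposal is therefore considerably more explicit than the paper's treatment, and the explicit $K_1$ matrix computations you give for both squares are correct.

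One point to tighten. The suspension identifications~\eqref{K_n-1(A)_is_K_n(SigmaA)} and~\eqref{NK_n-1(A)_is_NK_n(SigmaA)} shift degree \emph{upward}: $K_{n-1}(A)=K_n(\Sigma A)$. Replacing $A$ by $\Sigma^{\,1-n}A$ therefore reduces the case $n\le 1$ to $n=1$, but does nothing for $n\ge 2$. For those degrees the row reductions you wrote down must genuinely be promoted to short exact sequences (equivalently, admissible filtrations) of exact functors on $\FP(A\bbZ)$ so that Quillen additivity applies; that lift is precisely the content of Stienstra's argument, not an afterthought to it. In other words, your last paragraph is the proof for $n\ge 2$, and the matrix identities in the middle paragraphs are its $K_1$ shadow rather than an independent argument that only needs ``promoting''. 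With that reorganisation your outline matches what the paper invokes by citation.
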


The next result is well-known for $n=1$ (see Farrell \cite[Lemma~3]{farrell1}). The general case is discussed by Weibel \cite[p.~479]{weibel1}, Stienstra \cite[p.~90]{stienstra1}, and  Grunewald \cite[Prop.~4.6]{grunewald1}.
\begin{lemma} \label{lem:Frobenius_finally_vanishes}
  For every $n \in \bbZ$ and each $x \in \NK_n(A)$, there exists a positive integer $M(x)$ such
  that $ \res_{m} \circ\, i_n(x) = 0$ for $m \ge M(x)$.
\end{lemma}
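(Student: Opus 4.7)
The plan is to reduce the statement to showing that the Frobenius itself eventually vanishes on any element. Applying the $F_k$--$\res_k$ square of Lemma~\ref{lem:ind/res_and_V/F} gives $\res_m\circ i_n(x) = i_n\circ F_m(x)$, and since $i_n$ is split injective with retraction $r_n$, it suffices to prove: for every $x\in\NK_n(A)$ there exists $M(x)$ with $F_m(x)=0$ for all $m\geq M(x)$. By the convention on negative $K$-theory via the suspension ring (the remarks following \eqref{NK_n-1(A)_is_NK_n(SigmaA)}), it is enough to handle $n\geq 1$, and then to work with $\NK_n(A)=\widetilde{K}_{n-1}(\NIL(A))$ through Grayson's identification~\eqref{K_n(Nil(A))_is_NK_nplus1(A)}.

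Let $\NIL_N(A)\subset \NIL(A)$ denote the full exact subcategory of pairs $(Q,f)$ with $f^N=0$. Then $\NIL(A)=\bigcup_{N\geq 1}\NIL_N(A)$ is a filtered union of exact subcategories, and because Quillen's $K$-theory commutes with filtered colimits of exact categories, the natural map
$$\varinjlim_N K_{n-1}(\NIL_N(A)) \to K_{n-1}(\NIL(A))$$
is an isomorphism. Hence the given class $x$ lies in the image of $K_{n-1}(\NIL_N(A))$ for some $N\geq 1$. For every $m\geq N$, the Frobenius functor $(Q,f)\mapsto(Q,f^m)$ restricted to $\NIL_N(A)$ factors through the canonical inclusion $\FP(A)\hookrightarrow\NIL(A)$ sending $Q\mapsto(Q,0)$, since $f^m=0$ for $(Q,f)\in\NIL_N(A)$. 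Therefore $F_m(x)$ lies in the image of $K_{n-1}(\FP(A))=K_{n-1}(A)$, so its class in the cokernel $\widetilde{K}_{n-1}(\NIL(A))=\NK_n(A)$ is zero, and $M(x)=N$ works.

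The one nontrivial input is the continuity statement that $K_*(\NIL(A))=\varinjlim_N K_*(\NIL_N(A))$. For Quillen's $Q$-construction this is standard: any element of $K_{n-1}(\NIL(A))$ is represented by a map from a finite simplicial subset into $\Omega B Q\NIL(A)$, which involves only finitely many objects of $\NIL(A)$, and all of them lie in $\NIL_N(A)$ once $N$ exceeds the maximum of their individual nilpotency degrees. This is the main obstacle in the sense that everything else in the argument is a formal consequence of Lemma~\ref{lem:ind/res_and_V/F} and the identifications already recorded in Section~\ref{two}; once continuity is granted, the geometric intuition that the Frobenius $(Q,f)\mapsto(Q,f^m)$ kills the nilpotent part when $m$ exceeds the nilpotency degree takes over, and the result follows.
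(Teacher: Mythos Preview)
Your argument follows exactly the paper's approach: reduce via Lemma~\ref{lem:ind/res_and_V/F} to the vanishing of $F_m$ for large $m$, and deduce the latter from a filtration of $\NIL(A)$ by nilpotency degree; the paper states this last step tersely and defers to Stienstra and Grunewald for the details you have written out. One small correction: $\NIL_N(A)$ is not an \emph{exact} subcategory of $\NIL(A)$ since it is not closed under extensions (an extension of two endomorphisms with $f^N=0$ need only satisfy $f^{2N}=0$), so the phrase ``filtered union of exact subcategories'' and the appeal to continuity of $K$-theory on exact categories are not literally correct as stated---but your final paragraph already gives the right fix, arguing directly that $BQ\NIL(A)$ is the filtered colimit of the classifying spaces of the full sub-$Q$-categories $Q\NIL_N(A)$, and this is all that is needed.
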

\begin{proof}

  The Frobenius homomorphism $F_m \colon \NK_n(A) \to \NK_n(A)$ is induced by the functor
  sending $(Q,f)$ in $\NIL(A)$ to $(Q,f^m)$. For a given object $(Q,f)$ in $\NIL(A)$ there
  exists a positive integer $M(f)$ with $(Q,f^m) = (Q,0)$ for $m \ge M(f)$. This implies
  by a filtration argument
  (see \cite[p.~90]{stienstra1} or
  \cite[Prop.~4.6]{grunewald1}), that
  for $x \in\NK_n(A)$ there exists a positive integer $M(x)$ with $F_m(x) = 0$ for $m \ge
  M(x)$.
  Now the claim follows from Lemma~\ref{lem:ind/res_and_V/F}.
\end{proof}

\section{Subgroups of $G \times \bZ$}\label{three}
A finite group $G$ is called $p$-hyperelementary if is isomorphic to an extension
$$1\to C \to G \to P \to 1$$
where $P$ is a $p$-group, and $C$ is a cyclic group of order prime to $p$. Such an extension is a semi-direct product, and hence determined by the action map $\alpha\colon P \to \aut(C)$ defined by conjugation. The group $G$ is $p$-elementary precisely when $\alpha$ is the trivial map, or in other words, when there exists a retraction $G \to C$.
Notice that for a cyclic group $C=\cy{q^k}$, where $q\neq p$ is a prime, $\aut(C) = \cy{q^{k-1}(q-1)}$, if $q$ odd, or
$\aut(C) = \cy{2^{k-2}} \times \cy{2}$, $k\geq 2$,  if $q=2$.
In either case, $\aut(C)_{(p)} \cong \aut(Q)_{(p)}$ by projection to any non-trivial
quotient group $C \to Q$.

\begin{lemma}
\label{lem_hyper_elem_versus_elem}
Let $p$ be a prime, and let $G$ be a finite  $p$-hyper\-elemen\-ta\-ry group. Suppose
that for every prime $q\neq p$ which divides the order of $G$, there
exists an epimorphism $f_q \colon G \to Q_q$ onto a non-trivial cyclic group $Q_q$ of
$q$-power order.  Then $G$ is $p$-elementary.
\end{lemma}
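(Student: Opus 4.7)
The plan is to show that the defining action $\alpha \colon P \to \aut(C)$ in the semidirect product $G = C \rtimes_\alpha P$ is trivial, so that $G = C \times P$ is $p$-elementary. Since $C$ is cyclic of order prime to $p$, decompose $C = \prod_q C_q$ over the primes $q \mid |C|$ (necessarily $q \neq p$); the action splits correspondingly as $\alpha = \prod_q \alpha_q$ with $\alpha_q \colon P \to \aut(C_q)$, and it suffices to show each $\alpha_q$ is trivial. The primes $q$ dividing $|G|$ and different from $p$ are exactly the primes dividing $|C|$, so the hypothesis furnishes an epimorphism $f_q \colon G \to Q_q$ for every such $q$.

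The case $q = 2$ (which forces $p$ odd) is immediate: $\aut(C_2)$ is a $2$-group, and hence contains no nontrivial $p$-subgroup, so $\alpha_2 = 1$ automatically. Now fix an odd prime $q \neq p$ with $q \mid |C|$. Since $Q_q$ is abelian, $f_q$ factors through $G^{\mathrm{ab}}$, and a routine semidirect-product computation (using that $C$ is abelian, so $[c,p] = c - \alpha(p)c$ in additive notation) gives
$$G^{\mathrm{ab}} \;=\; C_P \,\oplus\, P^{\mathrm{ab}},$$
where $C_P = C / \langle c - \alpha(p)c : c \in C,\, p \in P\rangle$ denotes the $P$-coinvariants. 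Since $P^{\mathrm{ab}}$ has order a power of $p$ and $(C_{q'})_P$ has order prime to $q$ for any $q' \neq q$, the only piece that can contribute to the nontrivial $q$-group $Q_q$ is $(C_q)_P$. Hence $(C_q)_P \twoheadrightarrow Q_q$, and in particular $(C_q)_P \neq 0$.

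The final step is to show that $(C_q)_P \neq 0$ forces $\alpha_q = 1$. Write $C_q = \cy{q^k}$; since $q$ is odd, $\aut(C_q) \cong \cy{q^{k-1}} \times \cy{q-1}$. The image $\alpha_q(P)$ is a $p$-subgroup, and $p \neq q$, so it lies entirely in the $\cy{q-1}$ factor. If $\alpha_q \neq 1$, pick a generator acting on $C_q$ by multiplication by some $a \in (\bZ/q^k)^\times$ with $a \not\equiv 1 \pmod q$. Then $a-1$ is a unit in $\bZ/q^k$, so $(a-1)C_q = C_q$, which already forces $(C_q)_P = C_q/(a-1)C_q = 0$, contradicting the previous paragraph. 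Therefore $\alpha_q = 1$ for every prime $q \mid |C|$, so $\alpha = 1$ and $G = C \times P$ is $p$-elementary.

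The mildly technical ingredients are the identification $G^{\mathrm{ab}} = C_P \oplus P^{\mathrm{ab}}$ and the observation that the $p$-Sylow of $\aut(\bZ/q^k)$ sits inside the $\cy{q-1}$ factor; these are the only places where one must be careful. I expect no serious obstacle beyond organizing these elementary facts.
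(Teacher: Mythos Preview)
Your proof is correct. The paper takes a somewhat different, more packaged route: rather than working prime by prime through the abelianization and coinvariants, it bundles the epimorphisms into a single map $f=\prod f_q\colon G\to Q:=\prod Q_q$, observes that $f$ induces a retraction of the quotient extension $1\to Q\to \bar G\to P\to 1$ (so the induced action $\bar\alpha\colon P\to\aut(Q)$ is trivial, since $Q$ is abelian), and then lifts $\bar\alpha=1$ to $\alpha=1$ via the structural fact, recorded just before the lemma, that projection to any nontrivial quotient induces an isomorphism $\aut(\cy{q^k})_{(p)}\cong\aut(Q_q)_{(p)}$. Both arguments rest on the same kernel observation---a nontrivial $p$-power-order automorphism of $\cy{q^k}$ with $p\neq q$ is already nontrivial modulo $q$---but you express it as ``$a-1$ is a unit, hence $(C_q)_P=0$'', while the paper expresses it as an isomorphism of $p$-localized automorphism groups. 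The paper's version is shorter; yours is more explicit and has the small bonus of showing that the hypothesis is not even needed at $q=2$. One cosmetic point: when you write $(C_q)_P=C_q/(a-1)C_q$, the coinvariants are a priori the quotient by \emph{all} $(\alpha_q(p')-1)C_q$, but since one of these is already all of $C_q$ your conclusion is unaffected.
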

\begin{proof}
Let $Q$ be the product of the groups $Q_q$ over all primes $q\neq p$ which divide the order of $G$. Let $f\colon G \to Q$ be the product of the given epimorphisms.
Since every subgroup in $G$ of order prime to $p$ is characteristic, we have a diagram
$$\xymatrix@R-3pt{1 \ar[r] & C \ar[r]\ar[d] & G \ar[r]\ar[d] & P \ar@{=}[d]\ar[r]& 1\ \hphantom{.}\cr
1 \ar[r] & Q \ar[r] & \bar{G} \ar[r]\ & P \ar[r]& 1\ .
}$$
But the epimorphism $f\colon G \to Q$ induces a retraction $\bar{G} \to Q$ of the lower sequence, hence its action map $\bar{\alpha}\colon P \to \aut(Q)$ is trivial. As remarked above, this implies that $\alpha$ is also trivial and hence $G$ is $p$-elementary.
\end{proof}
We now combine this result with the techniques of \cite{farrell-hsiang3}. Given positive integers $m$, $n$  and a prime $p$, we choose
an integer $N=N(m,n,p)$ satisfying the following conditions:
\begin{enumerate}\addtolength{\itemsep}{0.2\baselineskip}
\item  $p\nmid N$, but $q\mid N$ if and only if $q\mid n$ for all primes $q\neq p$.
\item $k \geq \log_q(mn)$ (i.e.~$q^k \geq mn$) for each full prime power $q^k\|N$.
\end{enumerate}
The Farrell-Hsiang technique is to compute $K$-theory via $p$-hyperelementary subgroups $H \subset G \times \cy N$, and their inverse images $\Gamma_H = \pr^{-1}(H) \subset G \times \bZ$ via the second factor projection map $\pr\colon G\times \bZ \to G \times \cy N$.
\begin{lemma} \label{lem:deep_and_p-torsion}
  Let $G$ be a finite group and let $M$ be a positive integer.  Let $p$ be a prime
  dividing the order of $|G|$, and choose an integer $N= N(M, |G|,p)$.   For every $p$-hyperelementary subgroup $H \subset G \times \cy N$, one of the following holds:
  \begin{enumerate}
\item  the inverse image $\Gamma_H \subset G \times m\cdot\bZ$, for some $m \geq M$, or
 \item \label{lem:deep_and_p-torsion:p-elementary} the group $H$ is $p$-elementary.
\end{enumerate}
In the second case, we have the following additional properties:

 \begin{enumerate}\renewcommand{\theenumi}{\alph{enumi}}
 \renewcommand{\labelenumi}{(\theenumi)}
 \item \label{lem:deep_and_p-torsion:identification_of_H} There exists a finite $p$-group
   $P$ and isomorphism
   $$\alpha \colon P \times \bbZ \xrightarrow{\ \cong\ } \Gamma_H.$$

 \item \label{lem:deep_and_p-torsion:commutative_square} There exists a positive integer
   $k$, a positive integer $\ell$ with $(\ell,p) =1$, an element $u \in \cy \ell$ and an injective
   group homomorphism
   $$j \colon P \times \cy \ell \to G$$
   such that the following diagram commutes
   $$\xymatrix{\hsp{xxxx}{P \times \bbZ}\hsp{xxxx} \ar[r]^(0.6){\alpha}\ar[d]_{\id_P \times \beta}&\ {\Gamma_H}\ar[d]^{i} \\
   {P \times
     \cy \ell \times \bbZ}\ar[r]^(0.6){j \times k \cdot \id_{\bbZ}}&\ {G \times \bbZ}}$$
  where $i \colon
   \Gamma_{H} \to G \times \bbZ$ is the inclusion and $\beta \colon \bbZ \to \cy \ell
   \times \bbZ$ sends $n$ to $(nu,n)$.
\end{enumerate}
\end{lemma}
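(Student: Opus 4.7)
The plan is to track the second-coordinate projection $\pi_2 \colon G \times \bZ \to \bZ$ to establish the dichotomy, and then to construct the required data explicitly in the $p$-elementary case. First set $C := \pi_2(H) \subset \cy N$, $s := |C|$, and $m := N/s$. The image $\pi_2(\Gamma_H) \subset \bZ$ is the preimage of $C$ under the reduction map $\bZ \to \cy N$, hence equals $m\bZ$, so $\Gamma_H \subset G \times m\bZ$. Consequently it suffices to prove: if $m < M$, then $H$ is $p$-elementary.

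Assume $m < M$, so $s > N/M$. Decompose $\cy N \cong \prod_{q \mid N} \cy{q^{e_q}}$ and correspondingly $C = \prod_q C_q$ with $|C_q| = q^{s_q}$. From $\prod_q q^{e_q - s_q} = N/s < M$ each factor satisfies $q^{e_q - s_q} < M$; combining with the defining inequality $q^{e_q} \geq M \cdot |G|$ in the choice of $N$ yields $q^{s_q} > |G|$, so every $C_q$ is non-trivial. Every prime $q \neq p$ dividing $|H|$ divides $|G| \cdot N$, and hence divides $N$ by construction of $N$. Therefore, composing $\pi_2$ with projection onto $\cy{q^{e_q}}$ produces an epimorphism $H \twoheadrightarrow C_q$ onto a non-trivial cyclic $q$-group for each such $q$, and Lemma~\ref{lem_hyper_elem_versus_elem} implies $H$ is $p$-elementary.

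To construct the data in the $p$-elementary case, write $H = P \times C_H$ with $P$ the Sylow $p$-subgroup (which lies in $G \times \{0\}$ since $\cy N$ has order coprime to $p$), and fix a generator $c_1 = (y_1, b_1)$ of $C_H$. The same estimate $q^{s_q} > |G| \geq |y_1|$ applied to each prime $q$ dividing $|y_1|$ forces $|y_1| \mid s = |b_1|$; hence $|c_1| = \mathrm{lcm}(|y_1|, s) = s$ and the intersection $D := C_H \cap (G \times \{0\})$ is trivial. Replacing $c_1$ by a suitable power coprime to $s$ I may arrange $b_1 = N/s \in \cy N$. Set $\gamma := (y_1, N/s) \in G \times \bZ$; then $\pr(\gamma) = c_1 \in H$, so $\gamma \in \Gamma_H$, and $\gamma$ centralizes $P$ because $y_1 \in C_H$ commutes with $P$ in the direct product $H$. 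Using $y_1^s = 1$, every element of $\Gamma_H$ with second coordinate $nm$ has the form $p \gamma^n$ for a unique $p \in P$; this produces the isomorphism $\alpha \colon P \times \bZ \to \Gamma_H$, $\alpha(p, n) = (p y_1^n,\, nm)$ required in (a).

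For (b), set $k := m = N/s$, $\ell := |y_1|$ (coprime to $p$), $u := 1 \in \cy \ell$, and $j(p, c) := p \cdot y_1^c$. Injectivity of $j$ is immediate from $P \cap \la y_1 \ra = 1$, and commutativity of the diagram follows by comparing the two composites $(p, n) \mapsto (p y_1^n, kn)$. I expect the main obstacle to be the prime-by-prime $q$-adic estimate that both drives the dichotomy via Lemma~\ref{lem_hyper_elem_versus_elem} and forces $D = 1$; both rest on the inequality $q^{e_q} \geq M \cdot |G|$ engineered into the definition of $N$, and once these estimates are in hand the construction of $\gamma$ and the verification of the diagram are mechanical.
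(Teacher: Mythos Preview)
Your proof is correct and follows essentially the same strategy as the paper's: both hinge on the arithmetic estimate $q^{e_q}\ge M|G|$ built into $N$, both invoke Lemma~\ref{lem_hyper_elem_versus_elem} for the dichotomy, and both construct $\alpha$ by lifting a generator of the cyclic complement to an element $\gamma=(y_1,N/s)\in\Gamma_H$ that centralizes $P$. The only organisational difference is that the paper packages the argument via the Goursat pullback diagram for $H\subset G'\times\cy{N'}$ (first showing $G''$ is a $p$-group by contradiction), whereas you work directly with the prime decomposition of $\pi_2(H)$ to produce the required $q$-group quotients and to force $C_H\cap(G\times\{0\})=1$; the underlying estimates and constructions coincide.
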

\begin{proof} In the proof we will write elements in $\bZ/N$ additively and elements in $G$ multiplicatively.  Let $H \subset G \times \cy N$ be a $p$-hyperelementary subgroup, and suppose that $\Gamma_H$ is \emph{not} contained in $G \times m\cdot \bZ$ for any $m\geq M$.
We have a pull-back diagram
$$\xymatrix@R-2pt{&\ G''_\hsp{x}\ar@{=}[r]\ar@{>->}[d]&\ G''_\hsp{x}\ar@{>->}[d]\\
\cy{N''}\ \ar@{=}[d]\ar@{>->}[r]&H \ar@{->>}[r]\ar@{->>}[d]& G'\ar@{->>}[d]\\
\cy{N''}\ \ar@{>->}[r]&\cy{N'} \ar@{->>}[r]&\cy{\ell}
}$$
where $G'\subset G$ and $\cy{N'}$ are the images of $H\subset G \times \bZ$ under the first and second factor projection, respectively.
Notice that $\cy{\ell}$ is the common quotient group of $G'$ and $\cy{N'}$. In terms of this data, $H \subseteq G'\times \cy{N'}$ and hence the pre-image $\Gamma_H \subseteq G' \times m\cdot \bZ\subseteq G\times m\cdot \bZ$, where $m=N/N'$.

We now show that $G''$ is a $p$-group. Suppose, if possible, that some other prime $q\neq p$ divides $|G''|$. Since  $H$ is $p$-hyperelementary the Sylow $q$-subgroup of $H$ is cyclic. However  $G''\times \cy{N''} \subseteq H$,  so $q\nmid N''$. But $N' = N''\cdot \ell$, hence this implies
that the $q$-primary part $N'_q = \ell_q \leq |G'|\leq |G|$. Now
$$m = N/N' \geq q^k/N'_q \geq q^k/|G| \geq M$$
by definition of $N = N(M, |G|, p)$. This would imply that $\Gamma_H \subset G \times m\cdot \bZ$ for some $m\geq M$, contrary to our assumption. Hence $P:=G''$ is a $p$-group, or more precisely the $p$-Sylow subgroup of $G'$ since $p\nmid \ell$.

Alternative (ii) is an immediate consequence. If $q\neq p$ is a prime dividing $|H|$, then $q\mid N'$ since $G''$ is a $p$-group. Hence $H$ admits an epimorphism onto a non-trivial finite cyclic $q$-group. By Lemma \ref{lem_hyper_elem_versus_elem}, this implies that $H$ is $p$-elementary.
Note that there is an isomorphism $$j'=(id_P\times s)\colon  P \times \cy \ell \xrightarrow{\cong} G'$$ defined by the inclusion $id_P\colon P\subset G'$ and a splitting $s\colon \cy \ell \to G'$ of the projection $G' \to \cy \ell$.

Next we consider assertion (a). A similar pull-back diagram exists for the subgroup $\Gamma_H \subset G \times \bZ$. We obtain a pull-back diagram of exact
sequences
$$\xymatrix{ 1\ar[r]&P \ar[r]\ar@{=}[d]&\Gamma_H\ar[r]\ar[d]& \bZ\ar[r]\ar[d]& 1\\
1\ar[r]&P \ar[r]&G'\ar[r]& \cy \ell\ar[r]\ar@/_/[l]_s& 1}$$
since $P = \Gamma_H \cap (G \times 0)$, and $\pr_{\bZ}(\Gamma_H) = k\cdot \bZ$ for some positive integer $k$. This exact sequence splits, since it is the pull-back of the lower split sequence: we can choose the element $(g_0, k) \in \Gamma_H \subseteq G'\times \bZ$ which projects to a generator of $\pr_{\bZ}(\Gamma_H) $, by taking $g_0=s(u)$ where $u \in \cy \ell$ is a generator. The isomorphism $\alpha\colon P \times \bZ  \xrightarrow{\approx} \Gamma_H $ is defined by
$\alpha(g, n) = (gg_0^n, n)$ for $g\in P$ and $n \in \bZ$.

Assertion (b) follows by composing the splitting
$ (id_P\times s)\colon P \times \cy \ell \cong G'$  with the inclusion $G'\subseteq G$ to obtain an injection $j\colon  P \times \cy \ell\to G$. By the definition of $g_0$, the composite $(j\times k\cdot id_\bZ)\circ (id_P \times \beta) = i \circ \alpha$, where $i\colon \Gamma_H \to G\times \bZ$ is the inclusion.
\end{proof}

\section{The Proof of Theorem A}\label{four}

We will need some standard results from induction theory for Mackey
  functors over finite groups, due to Dress (see \cite {dress1}, \cite{dress2}), as well as  a refinement called the Burnside quotient Green ring associated to a Mackey functor (see \cite[\S 1]{h2006} for a description of this construction, and \cite{htw2007} for the detailed account).

For any homomorphism $\pr\colon \Gamma \to G$ from an infinite discrete group to a finite group $G$, the functor
$$\cM(H): = K_n(R\Gamma_H),$$
where $\Gamma_H =\pr^{-1}(H)$, is a Mackey functor defined on subgroups $H \subseteq G$. The required restriction maps exist because the index $[\Gamma_H: \Gamma_K]$ is finite for any pair of subgroups $K\subset H$ in $G$.  This point of view is due to
Farrell and Hsiang \cite[\S 2]{farrell-hsiang2}. The Swan ring $SW(G,\bZ)$ acts as a Green ring on $\cM$, and it is a fundamental fact of Dress induction theory that the Swan ring is computable from the family $\cH$ of hyperelementary subgroups of $G$. More precisely, the localized Green ring $SW(G,\bZ)_{(p)}$ is computable from the family $\cH_p$ of $p$-hyperelementary subgroups of $G$, for every prime $p$. If follows from Dress induction that the Mackey functor $\cM(G)_{(p)}$ is also $p$-hyperelementary computable. We need a refinement of this result.

\begin{theorem}[{\cite[Theorem 1.8]{h2006}}] \label{thm: computable}
 Suppose that $\cG$ is a Green ring which
acts on a Mackey functor $\cM$.  If $\cG\otimes \bZ_{(p)}$ is $\cH$-computable, then  every
$x\in \cM(G)\otimes \bZ_{(p)}$ can be written as
$$x = \sum_{H \in \cH_p} a_H \Ind_H^G(\Res_G^H(x))$$
for some coefficients $a_H \in  \bZ_{(p)}$.
\end{theorem}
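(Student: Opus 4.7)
My strategy combines the $\cH$-computability hypothesis on $\cG$ with Dress's $p$-local refinement for the Burnside ring, pushed through the Green ring action of $\cG$ on $\cM$ via Frobenius reciprocity.

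First, the hypothesis produces a decomposition
$$1_{\cG(G)} = \sum_{H \in \cH} \Ind_H^G(\omega_H)$$
in $\cG(G) \otimes \bZ_{(p)}$, with $\omega_H \in \cG(H) \otimes \bZ_{(p)}$. Acting on $x \in \cM(G) \otimes \bZ_{(p)}$ and applying the Frobenius reciprocity formula $\Ind_H^G(\omega) \cdot y = \Ind_H^G(\omega \cdot \Res_G^H(y))$ for the Green ring action, I obtain
$$x = 1 \cdot x = \sum_{H \in \cH} \Ind_H^G\bigl(\omega_H \cdot \Res_G^H(x)\bigr).$$
This already expresses $x$ as a sum of elements induced from $\cH$, but with general coefficients in $\cM(H) \otimes \bZ_{(p)}$.

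Second, I refine from $\cH$ to the $p$-hyperelementary family $\cH_p$ via Dress's classical theorem: for every $H \in \cH$, the Burnside ring $A(H) \otimes \bZ_{(p)}$ is computable from its $p$-hyperelementary subgroups, giving an identity $1_{A(H)} = \sum_{K} b_K^H [H/K]$ with $K$ ranging over the $p$-hyperelementary subgroups of $H$ and $b_K^H \in \bZ_{(p)}$. Under the canonical Green ring map $A(H) \to \cG(H)$, which sends $[H/K]$ to $\Ind_K^H(1_{\cG(K)})$, this produces a decomposition of $1_{\cG(H)}$ into induced units from $\cH_p$, and hence of $\omega_H = \omega_H \cdot 1_{\cG(H)}$ into induced elements from $\cH_p$ by a second application of Frobenius reciprocity.

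Third, I convert the coefficients into plain scalars. The cleanest route is to apply Dress's refinement directly to $A(G) \otimes \bZ_{(p)}$, obtaining $1 = \sum_{K \in \cH_p} a_K [G/K]$, and to invoke the universal $A(G)$-action on $\cM$ via the transfer formula $[G/K] \cdot x = \Ind_K^G(\Res_G^K(x))$, yielding
$$x = \sum_{K \in \cH_p} a_K \Ind_K^G(\Res_G^K(x))$$
with $a_K \in \bZ_{(p)}$. The hypothesis on $\cG$ ensures that the factorization $A(G) \to \cG(G)$ through the Burnside quotient Green ring of $\cM$ is compatible with this decomposition. The main obstacle is precisely this third step: converting the Green-ring coefficients $\omega_H$ into scalar multiples of the standard transfer $\Ind_K^G \circ \Res_G^K$. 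Scalar decompositions of the identity live naturally in the Burnside ring rather than in a general $\cG$, and the role of the hypothesis on $\cG$ is to certify that these decompositions descend coherently from $A(G)$ to $\cG$; this compatibility is the technical heart of the Burnside quotient Green ring framework in~\cite{h2006}.
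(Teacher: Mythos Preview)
The paper does not actually prove this theorem: it is quoted verbatim as \cite[Theorem~1.8]{h2006} and then applied as a black box in the proof of Theorem~A. So there is no ``paper's own proof'' to compare against; the argument lives in the cited reference and its companion \cite{htw2007}.

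That said, your proposal is essentially sound, and in fact your third step already contains the whole proof. Dress's theorem gives an identity $1=\sum_{K\in\cH_p} a_K\,[G/K]$ in $A(G)\otimes\bZ_{(p)}$ with $a_K\in\bZ_{(p)}$; since the Burnside ring acts on \emph{every} Mackey functor via $[G/K]\cdot x=\Ind_K^G\Res_G^K(x)$, applying this identity to $x$ yields the displayed formula immediately. Your steps~1--2, working through $\cG$ and Frobenius reciprocity, produce only the weaker statement that $x$ lies in the image of induction from $\cH$ with \emph{module-valued} coefficients $\omega_H\cdot\Res_G^H(x)$, and you correctly identify that converting these to scalars is the crux.

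You are also right to be puzzled by the role of the hypothesis on $\cG$: for the bare formula as stated, the universal Burnside ring action suffices and the assumption that $\cG\otimes\bZ_{(p)}$ is $\cH$-computable appears redundant. The point of the Burnside quotient Green ring framework in \cite{h2006,htw2007} is finer---it tracks which quotient of $A(G)$ genuinely acts on $\cM$, and the hypothesis on $\cG$ is what certifies computability of that quotient in situations where one wants sharper control (e.g.\ detection, not just generation). For the generation statement used here, your direct Burnside-ring argument is adequate; just be aware that the theorem as packaged in \cite{h2006} is doing slightly more bookkeeping than the displayed conclusion reveals.
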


We fix a prime $p$. For each element $x\in \NK_n(RG)$, let $M= M(x)$ as in Lemma \ref{lem:Frobenius_finally_vanishes} applied to the ring $A=RG$.  Then $$\res_{m} \colon
  K_n(R[G\times \bbZ]) \to K_n(R[G\times \bbZ])$$ sends $i_n(x)$ to
  zero for $m \ge M(x)$.
Now let $N= N(M, |G|, p)$, as defined in Section \ref{three}, and consider $\cM(H) = K_n(R\Gamma_H)$ as a Mackey functor on the subgroups $H \subseteq G \times \cy N$, via the projection $\pr\colon G\times \bZ \to G \times \cy N$.

 Let $\cH_p(x)$ denote the set of $p$-hyperelementary subgroups $H
  \subseteq G \times\cy N$, such that $\Gamma_H$ is \emph{not} contained in
  $G\times m\cdot \bZ$, for any $m\geq M(x)$.
  By the formula of Theorem \ref{thm: computable}, applied to $y = i_n(x)$, we see that $x$ lies in the image of the composite map
  \eqncount
   \begin{eqnarray}
  \bigoplus_{H \in \cH_p(x)} K_n(R\Gamma_H)_{(p)} & \xrightarrow{i_*} &
  K_n(R[G\times \bbZ])_{(p)} \xrightarrow{r_n} \NK_n(RG)_{(p)}.
  \label{x_lies_in_calh_p(M(x)}
  \end{eqnarray}

  We conclude from
  Lemma~\ref{lem:deep_and_p-torsion}~\eqref{lem:deep_and_p-torsion:commutative_square}
  (using that notation) that the composite
   \eqncount
\begin{multline}K_n(R[P \times \bbZ]) \xrightarrow{\alpha_*} K_n(R\Gamma_H)
    \xrightarrow{i_*}  K_n(R[G \times \bbZ]) \xrightarrow{r_n} \NK_n(RG)
  \label{comp(1)}
  \end{multline}
  agrees with the composite
   \eqncount
\begin{multline}
    K_n(R[P \times \bbZ]) \xrightarrow{(\id_P \times \beta)_*} K_n(R[P
    \times \cy \ell\times \bbZ]) \xrightarrow{(j \times \id_{\bbZ})_*}
    K_n(R[G\times \bbZ])
    \\
   \xrightarrow{(\id_G \times k \cdot\id_{\bbZ})_*} K_n(R[G \times
    \bbZ]) \xrightarrow{r_n} \NK_n(RG).
  \label{comp(2)}
  \end{multline}

  Recall that $\beta\colon \bbZ \to \cy \ell\times \bbZ$ sends $n$ to $(nu,n)$ for some generator $u \in \cy \ell$. Let
  $ \NIL(RP) \to \NIL(R[P \times \cy{\ell}])$  be the functor which sends a nilpotent $RG$-endomorphism $f \colon Q \to Q$ of a finitely generated $RP$-module $Q$
  to the nilpotent $R[G \times \cy{\ell}]$-endomorphism
    $$
  R[P \times \cy l] \otimes_{RP} Q \mapsto R[P \times \cy{\ell}] \otimes_{RP} Q,
  \quad x \otimes q \mapsto xu \otimes f(q).$$
 Let $\phi\colon
   \NK_n(RP) \to \NK_n(R[P \times \cy{\ell}])$ denote the induced homomorphism.

  \begin{lemma} \label{lem:three_commutative_diagrams}
  \mbox{}
  \renewcommand{\theenumi}{\alph{enumi}}
 \renewcommand{\labelenumi}{(\theenumi)}
\begin{enumerate}
 \item \label{lem:three_commutative_diagrams:(1)}
  The following diagram commutes
$$\xymatrix@!C=10em{K_n(R[P \times \bbZ]) \ar[r]^-{(\id_P \times \beta)_*}
\ar[d]^-{r_n} & K_n(R[P \times \cy{\ell} \times \bbZ]) \ar[d]^-{r_n}
\\
\NK_n(RP) \ar[r]^-{\phi} & \NK_n(R[P \times \cy{\ell}])\ .}$$

 \item \label{lem:three_commutative_diagrams:(2)}
  The following diagram commutes
  $$\xymatrix@!C=10em{K_n(R[P \times \cy{\ell}\times \bbZ])
  \ar[r]^-{(j\times \id_{\bbZ})_*} \ar[d]^-{r_n} &
  K_n(R[G\times \bbZ]) \ar[d]^-{r_n}
  \\
  \NK_n(R[P \times \cy{\ell}) \ar[r]^-{j_*}& \NK_n(RG)\ .}$$

\item \label{lem:three_commutative_diagrams:(3)}
  The following diagram commutes
  $$\xymatrix@!C=8em{{K_n(R[G\times \bbZ])}
  \ar[r]^{\ind_k}\ar[d]^{r_n}&{K_n(R[G \times \bbZ])}\ar[d]^ {r_n}\\
  {\NK_n(RG)}\ar[r]^{V_k}&{\NK_n(RG)}\ .}$$
  \end{enumerate}
  \end{lemma}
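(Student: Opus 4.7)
The plan is to verify the three commutative squares by reducing to the naturality of the Bass-Heller-Swan retraction $r_n$ in the coefficient ring, together with Grayson's explicit identification~\eqref{K_n(Nil(A))_is_NK_nplus1(A)}, which represents a class $[Q,f] \in \widetilde K_{n-1}(\NIL(A)) = \NK_n(A)$ as the class of the automorphism $1 - ft$ inside $K_n(A\bbZ)$.

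Part (b) is essentially naturality. The group homomorphism $j \times \id_{\bbZ}$ is the identity on the $\bbZ$-factor, so the induced ring map $R[P \times \cy{\ell} \times \bbZ] \to R[G \times \bbZ]$ is the tensor product over $R$ of the group ring map $R[P\times\cy{\ell}] \to RG$ (induced by $j$) with $R\bbZ$. Since the Bass-Heller-Swan splitting, and hence $r_n$, is natural in the coefficient ring, the square commutes and induces $j_*$ on $\NK_n$.

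For part (a), I verify the identity by precomposing both sides with each of the four injections that constitute the Bass-Heller-Swan decomposition~\eqref{Bass-Heller-Swan_decomposition} of $K_n(R[P\times\bbZ])$. On the first $\NK_n(RP)$-summand (the image of $i_n$), Grayson gives $i_n[Q,f] = [1 - ft]$. The ring map induced by $\id_P\times\beta$ sends $t\mapsto ut$, carrying this class to $[1 - \tilde f\, t] \in K_n(R[P\times\cy{\ell}]\bbZ)$, where $\tilde f\colon x\otimes q \mapsto xu\otimes f(q)$ on $R[P\times\cy{\ell}]\otimes_{RP} Q$ is precisely the value of the functor defining $\phi$ on $[Q,f]$; hence $r_n$ recovers $\phi[Q,f]$. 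On the other three summands I check that $r_n\circ(\id_P\times\beta)_*$ vanishes: the $K_n(RP)$-summand maps into the $K_n(R[P\times\cy{\ell}])$-summand; under the twist $t\mapsto ut$ the fundamental class $[t]\in K_1(\bbZ[t,t^{-1}])$ becomes $[u]+[t]$ with $[u]\in K_1(R[P\times\cy{\ell}])$, so the $s_n$-image of $K_{n-1}(RP)$ maps into the sum of the $K_n$- and $K_{n-1}$-summands of the target; and the second $\NK_n(RP)$-summand maps via the Grayson representative $[1 - ft^{-1}]$ into the second $\NK_n(R[P\times\cy{\ell}])$-summand. Since $\phi\circ r_n$ also vanishes on these three summands and agrees with $\phi$ on the first, the square commutes.

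Part (c) follows from Lemma~\ref{lem:ind/res_and_V/F} once we know that $\ind_k$ preserves the Bass-Heller-Swan decomposition summand-wise. That lemma gives $\ind_k\circ i_n = i_n\circ V_k$, so $\ind_k$ sends the first $\NK_n$-summand into itself via $V_k$. A calculation as in Stienstra~\cite{stienstra1} (whose argument applies here since $t$ is central in $RG[t]$) shows that on the $K_n(RG)$ and $K_{n-1}(RG)$ summands $\ind_k$ acts by multiplication by $k$, and on the second $\NK_n$-summand it acts by the analogous Verschiebung. Hence $\ind_k$ preserves $\ker(r_n)$, so $r_n\circ\ind_k$ vanishes there and equals $V_k$ on $i_n(\NK_n(RG))$, giving $r_n\circ\ind_k = V_k\circ r_n$. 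The main obstacle throughout is the careful summand-by-summand analysis in part~(a) needed to handle the twist introduced by $\beta$.
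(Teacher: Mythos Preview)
Your argument is essentially the same as the paper's: in each part you verify commutativity by working summand-by-summand through the Bass--Heller--Swan decomposition, invoking naturality of $r_n$ for part~(b), the explicit effect of the twist $t\mapsto ut$ for part~(a), and Lemma~\ref{lem:ind/res_and_V/F} for part~(c). The paper phrases (a) and (c) as the commutativity of the full $4\times 4$ BHS diagrams and cites Stienstra~\cite[Theorem~4.12]{stienstra1} for the $\NK$-blocks in~(a), whereas you compute these directly via Grayson's representative $[1-ft]$; this is a cosmetic difference only.

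One small correction in part~(c): $\ind_k$ acts as the \emph{identity} on the $K_n(RG)$-summand, not as multiplication by~$k$. Indeed, the inclusion $RG \to RG\bbZ$ composed with $\sigma_k$ is again the inclusion, so base change along $\sigma_k$ fixes this summand (the paper records the matrix entry $\id$ there). This slip does not damage your proof, since you only use that $\ind_k$ preserves $\ker(r_n)$, which holds either way.
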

  \begin{proof}
  \eqref{lem:three_commutative_diagrams:(1)}
  The tensor product $\otimes_\bZ$ induces a pairing
  \eqncount
\begin{eqnarray}
\label{naturality_of_pairing}
&\mu_{R, \Gamma}\colon K_{n-1}(R) \otimes_{\bbZ} K_1(\bbZ \Gamma) \to K_n(R\Gamma)&
\end{eqnarray}
for every group $\Gamma$, which is natural in $R$ and $\Gamma$.
  It suffices to prove that the following diagram is commutative for every ring $R$
  (since we can replace $R$ by $RP$). Let $A = R[\cy \ell]$ for short.
  $$\xymatrix@R30mm
  {K_n(R) \oplus K_{n-1}(R) \oplus \NK_n(R) \oplus \NK_n(R)
  \ar[d]_-{\left(\begin{array}{cccc}
  i_{1,1} & 0 & 0 & 0
  \\
  i_{1,2} & i_{2,2} & 0 & 0
  \\
  0 & 0 & \phi & 0
  \\
  0 & 0 & 0 & \phi
  \end{array}\right)}
\ar[r]^-B_-{\cong} & K_n(R\bbZ) \ar[d]^{\beta_*}
\\
K_n(A) \oplus K_{n-1}(A) \oplus \NK_n(A) \oplus \NK_n(A)
\ar[r]^-B_-{\cong} & K_n(A\bbZ)}$$
Here the vertical arrows are the
isomorphisms given by the Bass-Heller-Swan
decomposition~\eqref{Bass-Heller-Swan_decomposition}, the homomorphisms $i_{1,1}$ and $i_{2,2}$ are induced
by the inclusion $R \to R[\cy \ell]$ and the homomorphism $i_{1,2}$ comes from the pairing
 \eqncount
\begin{eqnarray}
&\mu_{R, \cy \ell}\colon K_{n-1}(R) \otimes_{\bbZ} K_1(\bbZ [\cy \ell]) \to K_n(R[\cy \ell])&
\end{eqnarray}
evaluated at the class of the unit $u \in \bbZ[\cy \ell]$ in $K_1(\bbZ[\cy \ell])$ and
the obvious change of rings homomorphisms $K_1(R[\cy \ell]) \to K_1(R[\cy \ell \times \bbZ])$

In order to show commutativity it suffices to prove its commutativity after restricting to
one of the four summands in the left upper corner.

This is obvious for $K_n(RG)$ since induction with respect to group homomorphisms is
functorial.

For $K_{n-1}(R)$ this follows from the naturality of the pairing
(\ref{naturality_of_pairing})
in $R$ and the group $\cy \ell$ and the equality
$$K_1(\beta)(t) = K_1(R[j_{\bbZ}])(t) + K_1(R[j_{\cy \ell}])(u)$$
where $j_{\bbZ} \colon
\bbZ \to \cy \ell \times \bbZ$ and $j_{\cy \ell} \colon \cy \ell \to \cy \ell \times \bbZ$ are the
obvious inclusions.

The commutativity when restricted to the two Bass $\Nil$-groups follows from a result of
Stienstra~\cite[Theorem~4.12 on page~78]{stienstra1}.
\\[1mm]
\eqref{lem:three_commutative_diagrams:(2)} This follows from the naturality in $R$ of
$r_n$.
\\[1mm]
\eqref{lem:three_commutative_diagrams:(3)} It suffices to show that the following diagram
commutes (since we can replace $R$ by $RG$)
$$\xymatrix@R30mm {K_n(R) \oplus K_{n-1}(R) \oplus \NK_n(R) \oplus \NK_n(R)
  \ar[d]_-{\left(\begin{array}{cccc} \id & 0 & 0 & 0
        \\
        0 & k\cdot \id & 0 & 0
        \\
        0 & 0 & V_k & 0
        \\
        0 & 0 & 0 & V_k
  \end{array}\right)}
\ar[r]^-B_-{\cong} & K_n(R\bbZ) \ar[d]^{\ind_k}
\\
K_n(R) \oplus K_{n-1}(R) \oplus \NK_n(R) \oplus \NK_n(R) \ar[r]^-B_-{\cong} &
K_n(R\bbZ)}$$
where the vertical arrows are the isomorphisms given by the
Bass-Heller-Swan decomposition~\eqref{Bass-Heller-Swan_decomposition}.

In order to show commutativity it suffices to prove its commutativity after restricting to
one of the four summands in the left upper corner.

This is obvious for $K_n(R)$ since induction with respect to group homomorphisms is
functorial.

Next we inspect $K_{n-1}(R)$. The following diagram commutes
$$\xymatrix{{K_{n-1}(R)
  \otimes_{\bbZ} K_1(\bbZ[\bbZ])}\ar[r]\ar[d]^{\id \otimes \ind_{k}}&{K_n(R\bbZ)}\ar[d]^ {\ind_{k}}\\
     {K_{n-1}(R) \otimes_{\bbZ}
  K_1(\bbZ[\bbZ])}\ar[r]&{K_n(R\bbZ)}}$$
  where the horizontal pairings are given by $\mu_{R, \bZ}$ from (\ref{naturality_of_pairing}). Since in $K_1(\bZ[\bZ])$, $k$ times the class $[t]$ of the unit $t$ is the class $[t^k]=\ind_k([t])$, the claim follows for $K_{n-1}(R)$.

The commutativity when restricted to the copies of $\NK_n(R)$ follows from
Lemma~\ref{lem:ind/res_and_V/F}. This finishes the proof of
Lemma~\ref{lem:three_commutative_diagrams}.
  \end{proof}

  Lemma~\ref{lem:three_commutative_diagrams} implies that the composite~\eqref{comp(2)}
  and hence the composite~\eqref{comp(1)} agree with the composite
  \begin{multline*}
    K_n(R[P \times \bbZ]) \xrightarrow{r_n} \NK_n(RP) \xrightarrow{\phi} \NK_n(R[P \times \cy \ell])
    \\
    \xrightarrow{\ j_*\ } \NK_n(RG) \xrightarrow{V_k} \NK_n(RG).
  \label{com(3)}
  \end{multline*}
  Since we have already shown that the element $x \in \NK_n(RG)_{(p)}$ lies in the image
  of \eqref{x_lies_in_calh_p(M(x)}, we conclude that $x$ lies in the image of the map
$$
  \Phi = (\Phi_P)\colon
  \bigoplus_{P \in \fP_p(G)}
  \NK_n(RP)_{(p)} \to  \NK_n(RG)_{(p)}
$$
subject only to the restriction $k \in I(g)$ in the definition of $\Phi_P$.

 Consider $k \ge 1$, $P \in \fP_p$ and $g
\in C^{\perp}_GP_p$. We write $k = k_0k_1$ for $k_1 \in I(g)$ and
$(k_0,|g|) = 1$. We have $V_k = V_{k_1} \circ V_{k_0}$ (see
Stienstra~\cite[Theorem~2.12]{stienstra1}).  Since $(k_0,|g|) =
1$, we can find an integer $l_0$ such that $(l_0,|g|) = 1$ and
$(g^{l_0})^{k_0} = g$. We conclude from
Stienstra~\cite[page~67]{stienstra1}
$$V_{k_0} \circ \phi(P,g) = V_{k_0} \circ \phi(P,(g^{l_0})^{k_0})
= \phi(P,g^{l_0}) \circ V_{k_0}.$$
Hence the image of $V_k \circ \phi(P,g)$ is contained in the image of
$V_{k_1} \circ \phi(P,g^{l_0})$ and $g^{l_0} \in C^{\perp}_GP_p$.
This finishes the proof of Theorem A.
\qed

\section{Examples}
We briefly discuss some examples. As usual,  $p$ is a prime and $G$ is a finite group. The first example shows that Theorem A gives some information about $p$-elementary groups.

\begin{example} \label{exa:p-elementary}
Let $P$ be a finite $p$-group and let $\ell \ge 1$ an integer with $(\ell,p) = 1$.
Then Theorem A says that $\NK_n(R[P \times\cy \ell])_{(p)}$ is generated by the images of the maps
$$V_k \circ \phi(P,g) \colon  \NK_n(RP)_{(p)} \to \NK_n(R[P \times\cy \ell])_{(p)}$$
for all $g \in \cy \ell$, and all $k \in I(g)$.
Since the composite $F_k \circ V_k=k \cdot \id$ for $k\geq 1$
(see~\cite[Theorem~2.12]{stienstra1}) and $(k,p) = 1$, the  map
$$V_k \colon \NK_n(RG)_{(p)} \to \NK_n(RG)_{(p)}$$ is injective for all $k \in I(g)$. For $g=1$, $\phi(P, 1)$ is the map induced by the first factor inclusion $P \to P \times\cy \ell$, and this is a split injection.
In addition, the composition of  $\phi(P,g)$ with the functor induced by the projection $P\times \cy \ell \to P$ is the identity on $\NIL(RP)$, for any $g\in \cy \ell$.
 Therefore, all the maps $V_k \circ \phi(P,g)$ are split injective.
It would be interesting to understand better the images of these maps as $k$ and $g$ vary. For example, what is the image of $\Phi_{\{1\}}$ where we take $P = \{1 \}$~?
\qed
\end{example}

In some situations the Verschiebung homomorphisms and
the homomorphisms $\phi(P,g)$ for $g \not=1$ do not occur.

\begin{example} \label{ex:no_phi(P,g)_and_V_k}
Suppose that $R$ is a regular ring.
  We consider the special situation where the $p$-Sylow subgroup $G_p$ is
  a normal subgroup of $G$, and furthermore where $C_G(P) \subseteq P$ holds for
  every non-trivial subgroup $P \subseteq G_p$. For $P \neq \{ 1\}$, we have $C^\perp_G(P) = \{1\}$ and the homomorphism
  $\Phi_P = \phi(P, 1)$, which is the ordinary induction map. We can ignore the map $\Phi_{\{1\}}$ since $\NK_n(R) = 0$ by assumption.
  Therefore, the (surjective) image of $\Phi$ in Theorem A is just the image of the induction map
  $$ \NK_n(RG_p)_{(p)} \to  \NK_n(RG)_{(p)}\ .$$
 Note that $\NK_n(RG_p)$ is $p$-local, and we can divide out the conjugation action on $ \NK_n(RG_p)$ because inner automorphisms act as the identity
 on $\NK_n(RG)$. However, $G/G_p$ is a
finite group of order prime to $p$, so that
$$H_0(G/G_p; \NK_n(RG_p)) = H^0(G/G_p; \NK_n(RG_p)) = \NK_n(RG_p)^{G/G_p}\ .$$
 Hence the induction map on this fixed submodule
 $$\lambda_n\colon \NK_n(RG_p)^{G/G_p} \to  \NK_n(RG)_{(p)}$$
  is surjective.
   An easy application of the double coset formula shows that the composition of $\lambda_n$
with the restriction map $\res_{G}^{G_p} \colon \NK_n(RG)_{(p)} \to \NK_n(RG_p)_{(p)}$ is
given by $|G/G_p|$-times the inclusion $\NK_n(RG_p)^{G/G_p} \to \NK_n(RG_p)$. Since
$(|G/G_p|,p) = 1$ this composition, and hence $\lambda_n$, are both injective.  We conclude that
$\lambda_n$ is an isomorphism.

 Concrete examples are provided by semi-direct products $G = P \rtimes C$, where $P$ is a cyclic $p$-group, $C$ has order prime to $p$, and the action map $\alpha\colon C \to \aut(P)$ is injective. If we assume, in addition, that the order of $C$ is square-free, then
$$\NK_n(\bZ P)^{C} \xrightarrow{\cong} \NK_n(\bZ[P \rtimes C])$$
for all $n\leq 1$ (this dimension restriction, and setting $R=\bZ$, are only needed to apply Bass-Murthy \cite{bass-murthy1} in order to eliminate possible torsion in the $\Nil$-group of orders prime to $p$).
\qed
\end{example}

\section{$NK_n(A)$ as a Mackey functor}\label{seven}
Let $G$ be a finite group.
We want to show that the natural maps
$$i_n \colon \NK_n(RG) \to K_n(R[G\times \bbZ])$$ and
$$r_n \colon K_n(R[G\times \bbZ]) \to \NK_n(RG)$$ in the
Bass-Heller-Swan isomorphism
 are maps of Mackey functors (defined on subgroups of $G$).
Hence  $\NK_n(RG)$ is a direct summand of $K_n(R[G \times \bbZ])$
as a Mackey functor. Since $RG$ is a finitely generated free $RH$-module, for any subgroup $H\subset G$, it is enough to apply the following lemma to $A = RH$ and $B= RG$.

\begin{lemma} \label{lem:Bass_Heller_Swan_and_Mackey}
Let $i \colon A \to B$ be an inclusion of rings. Then the following diagram commutes
$$\xymatrix{K_n(A) \oplus K_{n-1}(A) \oplus \NK_n(A) \oplus \NK_n(A)
\ar[r]^-{\cong} \ar[d]^-{i_* \oplus i_* \oplus i_* \oplus i_*}
&
K_n(A\bbZ) \ar[d]^-{i[\bbZ]_*}
\\
K_n(B) \oplus K_{n-1}(B) \oplus \NK_n(B) \oplus \NK_n(B)
\ar[r]^-{\cong}
&
K_n(B\bbZ)
}$$
 where the vertical maps are given by induction, and
 the horizontal maps are the Bass-Heller-Swan
isomorphisms.
If $B$ is finitely generated and projective, considered as an $A$-module, then
$$\xymatrix{K_n(B) \oplus K_{n-1}(B) \oplus \NK_n(B) \oplus \NK_n(B)
\ar[r]^-{\cong} \ar[d]^-{i^* \oplus i^* \oplus i^* \oplus i^*}
&
K_n(B\bbZ) \ar[d]^-{i[\bbZ]^*}
\\
K_n(A) \oplus K_{n-1}(A) \oplus \NK_n(A) \oplus \NK_n(A)
\ar[r]^-{\cong}
&
K_n(A\bbZ)
}$$
where the vertical maps are given by restriction, and
 the horizontal maps are the Bass-Heller-Swan
isomorphisms.
\end{lemma}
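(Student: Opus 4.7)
The approach is to verify both diagrams summand-by-summand, using the explicit description of the Bass-Heller-Swan isomorphism $B$ from the excerpt: $B$ is the direct sum of $K_n(j_+\circ i)$ on the $K_n(A)$ summand, the splitting $s_n$ on $K_{n-1}(A)$ (given by the pairing $\mu_{A,\bZ}$ with the class of $t\in K_1(\bZ[t,t^{-1}])$), and the restrictions of $K_n(j_\pm)$ to the two copies of $\NK_n(A)\subseteq K_n(A[t])$.

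\textbf{Induction diagram.} The first diagram is essentially a repackaging of the fact, already asserted in the excerpt, that the isomorphism $B$ in~\eqref{Bass-Heller-Swan_decomposition} is natural in the ring $A$. Unpacking: the maps $K_n(j_+\circ i)$ and $K_n(j_\pm)$ are induced by ring homomorphisms, so naturality of $K_n(-)$ in the ring gives commutativity on the first and third/fourth summands; the pairing $\mu_{A,\bZ}$ of~\eqref{naturality_of_pairing} is natural in $A$ by construction (tensor product over $\bZ$ is functorial in the first factor), so naturality on $K_{n-1}(A)$ follows as well. Combining these four naturalities with the additivity of $B$ proves the first assertion.

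\textbf{Restriction diagram.} The hypothesis that $B$ is finitely generated projective as an $A$-module ensures $B[t]=B\otimes_A A[t]$ is f.g.\ projective over $A[t]$ and $B\bZ=B\otimes_A A\bZ$ is f.g.\ projective over $A\bZ$, so the restriction maps $i[t]^*$ and $i[\bZ]^*$ are defined. For the $K_n$ summand and the two $\NK_n$ summands, commutativity reduces to the identification of $A\bZ$-modules
\[
i[\bZ]^*\bigl((B\bZ)\otimes_B M\bigr)\;=\;A\bZ\otimes_A (i^*M)
\]
for $M$ a f.g.\ projective $B[t]$-module (resp.\ $B$-module), which follows from the canonical isomorphism $A\bZ\otimes_A B\cong B\bZ$ and the analogous one for $B[t]$.

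\textbf{The pairing summand (main obstacle).} The only nontrivial step is commutativity on the $K_{n-1}(B)$ summand, which requires a \emph{projection formula}: for $x\in K_{n-1}(B)$,
\[
i[\bZ]^*\bigl(\mu_{B,\bZ}(x\otimes [t])\bigr)\;=\;\mu_{A,\bZ}\bigl(i^*(x)\otimes [t]\bigr)
\]
in $K_n(A\bZ)$. Since $\mu$ is induced by the exact external tensor product $-\otimes_\bZ -\colon \FP(B)\times \FP(\bZ[t,t^{-1}])\to \FP(B\bZ)$, the formula reduces to the categorical identity $i[\bZ]^*(N\otimes_\bZ P)=(i^*N)\otimes_\bZ P$ of exact functors $\FP(B)\times \FP(\bZ[t,t^{-1}])\to \FP(A\bZ)$, valid because the $A\bZ=A\otimes_\bZ \bZ[t,t^{-1}]$-action on $N\otimes_\bZ P$ inherited via $i[\bZ]\colon A\bZ\to B\bZ$ agrees on the nose with the $A\bZ$-action on $(i^*N)\otimes_\bZ P$. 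Passing to $K$-theory yields the projection formula and completes the verification for the fourth summand, finishing the proof.
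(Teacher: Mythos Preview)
Your proof is correct and follows essentially the same approach as the paper: both verify commutativity summand-by-summand, invoke functoriality of ring-homomorphism-induced maps for the $K_n$ and $\NK_n$ summands, and isolate the projection formula for the $K_{n-1}$ summand in the restriction diagram as the only substantive step---your categorical identity $i[\bZ]^*(N\otimes_\bZ P)=(i^*N)\otimes_\bZ P$ is exactly the paper's natural isomorphism $(\res_A P)\otimes_{\bZ} Q \xrightarrow{\cong}\res_{A\bZ} (P \otimes_{\bZ} Q)$. One small notational slip: in your displayed formula for the $\NK_n$ restriction summands, the tensor products should be over $B[t]$ and $A[t]$ (with $i[t]^*$ in place of $i^*$), not over $B$ and $A$; the argument is unaffected.
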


\begin{proof}
One has to show the commutativity of the diagram when restricted
to each of the four summands in the left upper corner.  In each case
these maps are induced by functors, and one shows that the two corresponding composites
of functors are naturally equivalent. Hence the two composites induce the same map on $K$-theory.
As an illustration we do this in two cases.

Consider the third summand $\NK_n(A)$ in the first diagram. The Bass-Heller-Swan
isomorphism restricted to it is given by the the restriction of the map
$(j_+)_* \colon K_n(A[t]) \to K_n(A[t,t^{-1}]) = K_n(A\bbZ)$ induced
by the obvious inclusion $j_+ \colon A[t] \to A[t,t^{-1}]$ restricted to
$\NK_n(A) = \ker \left(\epsilon_* \colon K_n(A[t]) \to K_n(A)\right)$,
where $\epsilon \colon A[t] \to A$ is given by $t = 0$.
Since all these maps come from induction with
ring homomorphisms, the following two diagrams commute
$$\xymatrix{K_n(A[t]) \ar[r]^-{\epsilon_*} \ar[d]^-{i[t]_*} & K_n(A) \ar[d]^-{i_*}
\\
K_n(B[t]) \ar[r]^-{\epsilon_*}  & K_n(B)
}
$$
and
$$\xymatrix{K_n(A[t]) \ar[r]^-{(j_+)_*} \ar[d]^-{i[t]_*} &
K_n(A[t,t^{-1}]) \ar[d]^-{i[t,t^{-1}]_*}
\\
K_n(B[t]) \ar[r]^-{(j_+)_*}  &
K_n(B[t,t^{-1}])
}
$$
and the claim follows.

Consider the second summand $K_{n-1}(B)$ in the second diagram.
The restriction of the Bass-Heller-Swan isomorphism to $K_{n-1}(B)$
is given by evaluating the pairing~\eqref{naturality_of_pairing} for $\Gamma = \bbZ$
 for the unit $t \in \bbZ[\bbZ]$. Hence it suffices
to show that the following diagram commutes, where the horizontal maps
are given by the pairing~\eqref{naturality_of_pairing} for $\Gamma = \bbZ$ and the vertical
maps come from restriction
$$
\xymatrix{K_{n-1}(B) \otimes K_1(\bbZ[\bbZ]) \ar[r] \ar[d]_{i^* \otimes \id}
& K_n(B\bbZ) \ar[d]_{i[\bbZ]^*}
\\
K_{n-1}(A) \otimes K_1(\bbZ[\bbZ]) \ar[r]
& K_n(A\bbZ)
}$$
This follows from the fact that for a finitely generated projective $A$-module
$P$ and a finitely generated projective $\bbZ[\bbZ]$-module $Q$ there
is a natural isomorphism of $B\bbZ$-modules
$$(\res_A P)\otimes_{\bbZ} Q \xrightarrow{\cong}
\res_{A\bbZ} (P \otimes_{\bbZ} Q),
\quad p \otimes q \mapsto p \otimes q. \qedhere$$
\end{proof}
\begin{corollary}
Let $G$ be a finite group, and $R$ be a ring. Then, for any subgroup $H\subset G$,  the induction
maps $\ind_H^G\colon \NK_n(RH) \to \NK_n(RG)$ and the restriction maps $\res^H_G\colon \NK_n(RG) \to  \NK_n(RH)$ commute with the Verschiebung and Frobenius homomorphisms $V_k$, $F_k$, for $k \geq 1$.
\end{corollary}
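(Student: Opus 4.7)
The plan is to combine Lemma~\ref{lem:ind/res_and_V/F} with Lemma~\ref{lem:Bass_Heller_Swan_and_Mackey}. Lemma~\ref{lem:ind/res_and_V/F} identifies $V_k$ and $F_k$ on $\NK_n(RG)$ with $\ind_k$ and $\res_k$ on $K_n(R[G \times \bZ])$ via the natural splittings $i_n$ and $r_n$; in particular, since $r_n \circ i_n = \id$, we have $V_k = r_n \circ \ind_k \circ i_n$ and $F_k = r_n \circ \res_k \circ i_n$. Lemma~\ref{lem:Bass_Heller_Swan_and_Mackey}, applied to the inclusion of rings $RH \hookrightarrow RG$ (which presents $RG$ as a finitely generated free $RH$-module), shows that both $i_n$ and $r_n$ intertwine $\ind_H^G$ (resp.\ $\res^H_G$) on the $\NK_n$ summand with the base-change map $(i[\bZ])_*$ (resp.\ $(i[\bZ])^*$) on $K_n(R[- \times \bZ])$.

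Given these two ingredients, the only remaining task is to verify that $\ind_k$ and $\res_k$ on $K_n(A\bZ)$ themselves commute with the ring-induced maps $(i[\bZ])_*$ and $(i[\bZ])^*$ for any ring homomorphism $i \colon A \to B$. The key observation is that the endomorphism $\sigma_k \colon A\bZ \to A\bZ$, $t \mapsto t^k$, is natural in $A$: it is the base change of the corresponding endomorphism of $\bZ[\bZ]$, so $i[\bZ] \circ \sigma_k^A = \sigma_k^B \circ i[\bZ]$. Consequently the twisted bimodule $(A\bZ)_{\sigma_k}$ defining $\ind_k$, and the twisted module structure defining $\res_k$, are compatible with base change; this gives natural isomorphisms of exact functors such as $B\bZ \otimes_{A\bZ} \ind_k^A(-) \cong \ind_k^B(B\bZ \otimes_{A\bZ} -)$, with analogous statements for $\res_k$ and for restriction of scalars (using that $B$ is f.g.\ projective over $A$). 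Passing to $K$-theory yields the required commutation.

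Assembling the three pieces, one reads off for example
\begin{equation*}
V_k \circ \ind_H^G \;=\; r_n \, \ind_k \, i_n \, \ind_H^G \;=\; r_n \, \ind_k \, (i[\bZ])_* \, i_n \;=\; r_n \, (i[\bZ])_* \, \ind_k \, i_n \;=\; \ind_H^G \circ V_k,
\end{equation*}
and the three analogous identities (for $F_k$ paired with $\ind_H^G$, and for $V_k, F_k$ paired with $\res^H_G$) follow by the same substitutions. I do not foresee any substantial obstacle: the only real content is the naturality of $\sigma_k$ in the coefficient ring, which is immediate because $\sigma_k$ is defined over $\bZ$. Everything else is formal manipulation of diagrams already set up in the paper.
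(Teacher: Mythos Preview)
Your proposal is correct and follows essentially the same approach as the paper, which simply says to combine Lemma~\ref{lem:Bass_Heller_Swan_and_Mackey} with Lemma~\ref{lem:ind/res_and_V/F} (together with its $r_n$-variant). You have made explicit the one ingredient the paper leaves tacit---that $\ind_k$ and $\res_k$ on $K_n(-\,\bbZ)$ commute with the base-change maps $(i[\bZ])_*$ and $(i[\bZ])^*$ because $\sigma_k$ is induced from $\bZ[\bZ]$---which is exactly what is needed to splice the two lemmas together.
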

\begin{proof} We combine the results of Lemma \ref{lem:Bass_Heller_Swan_and_Mackey} with Stienstra's Lemma \ref{lem:ind/res_and_V/F} (note that these two diagrams also commute with $i_n$ replaced by $r_n$).
\end{proof}
\providecommand{\bysame}{\leavevmode\hbox to3em{\hrulefill}\thinspace}
\providecommand{\MR}{\relax\ifhmode\unskip\space\fi MR }
\providecommand{\MRhref}[2]{%
  \href{http://www.ams.org/mathscinet-getitem?mr=#1}{#2}
}
\providecommand{\href}[2]{#2}

\bigskip
\noindent Ian Hambleton\\
    Department of Mathematics \& Statistics\\McMaster University \\Hamilton, ON L8S 4K1, Canada.\\
   E-mail:  ian@math.mcmaster.ca\\

\medskip

\noindent Wolfgang L\"uck\\
    Mathematisches Institut\\Universit\"at M\"unster \\D-48149 M\"unster, Germany.\\
   E-mail:  lueck@math.uni-muenster.de\\

\end{document}